\journal{}
\newcommand{\ul}{\underline}
\newcommand{\dd}{\textnormal{d}}
\newtheorem{theorem}{Theorem}
\newtheorem{lemma}[theorem]{Lemma}
\newdefinition{remark}{Remark}
\newproof{proof}{Proof}
\newdefinition{definition}{Definition}
\newcommand{\qedhere}{\tag*{\qed}}
\def\ps@pprintTitle{%
\let\@oddhead\@empty
\let\@evenhead\@empty
\def\@oddfoot{\footnotesize\itshape
 \hfill\today}%
\let\@evenfoot\@oddfoot}
\begin{document}

\begin{frontmatter}

\title{A Robust Multigrid Method for Isogeometric Analysis using Boundary Correction}

\author[numa]{Clemens Hofreither}
\ead{chofreither@numa.uni-linz.ac.at}
\author[ricam]{Stefan Takacs}
\ead{stefan.takacs@ricam.oeaw.ac.at}
\author[numa]{Walter Zulehner}
\ead{zulehner@numa.uni-linz.ac.at}

\address[numa]{Department of Computational Mathematics, Johannes Kepler University Linz, Austria}

\address[ricam]{Johann Radon Institute for Computational and Applied Mathematics (RICAM),\\
Austrian Academy of Sciences}

\date{\today}

\begin{abstract}
    We consider geometric multigrid methods for
	the solution of linear systems arising from isogeometric discretizations
	of elliptic partial differential equations.
	For classical finite elements, such methods are well known to be fast solvers
    showing optimal convergence behavior.
	However, the naive application of multigrid to the isogeometric case results in
    significant deterioration of the convergence rates if the spline degree is increased.

    Recently, a robust approximation error estimate and a corresponding inverse inequality for
	B-splines of maximum smoothness have been shown, both with constants independent of
	the spline degree.
    We use these results to construct multigrid solvers for discretizations based on
    B-splines with maximum smoothness which exhibit robust convergence rates.
\end{abstract}

\begin{keyword}
Isogeometric Analysis \sep
geometric multigrid \sep
robustness
\end{keyword}

\end{frontmatter}


\section{Introduction}\label{sec:1}

Isogeometric Analysis (IgA), introduced by Hughes et al.~\cite{Hughes:2005},
is an approach to the discretization of
partial differential equations (PDEs) which aims to bring geometric modeling
and numerical simulation closer together.
The fundamental idea is to use spaces of B-splines or non-uniform rational B-splines (NURBS)
both for the geometric description of the computational domain
and as discretization spaces for the numerical solution of PDEs on such domains.
As for classical finite element methods (FEM), this leads to
linear systems with large, sparse matrices.
A good approximation of the solution of the PDE requires sufficient refinement, which
causes both the dimension and the condition number of the stiffness matrix to grow.
At least for problems on three-dimensional domains or
the space-time cylinder, the use of direct solvers does not seem feasible.
The development of efficient linear solvers or preconditioners for such
linear systems is therefore essential.

For classical FEM, it is well-known that hierarchical methods, like
multigrid and multilevel methods, are very efficient and show optimal complexity, that is,
the required number of iterations for reaching a fixed accuracy goal is independent of the grid
size. In this case, the overall computational complexity of the method grows only linearly
with the number of unknowns.

It therefore seems natural to extend these methods to IgA, and several
results in this direction can be found in the literature.
Multigrid methods for IgA based on classical concepts have been considered in
\cite{GahalautEtAl:2013,HofreitherZulehner:2014,HofreitherZulehner:2014c},
and a classical multilevel method in \cite{BuffaEtAl:2013}.
It has been shown early on that a standard approach to constructing
geometric multigrid solvers for IgA
leads to methods which are robust in the grid size~\cite{GahalautEtAl:2013}.
However, it has been observed that the resulting
convergence rates deteriorate significantly when the spline degree
is increased. Even for moderate choices like splines of degree four,
too many iterations are required for practical purposes.

Recently, in~\cite{HofreitherZulehner:2014b}
some progress has been made by using a Richardson method
preconditioned with the mass matrix as a smoother (\emph{mass-Richardson smoother}).
The idea is to carry over the concept of operator
preconditioning to multigrid smoothing: here the (inverse of) the mass matrix can
be understood as a Riesz isomorphism representing the standard $L^2$-norm, the
Hilbert space where the classical multigrid convergence analysis is developed.
Local Fourier analysis
indicates that a multigrid method equipped with such a smoother should show convergence rates that are
independent of both the grid size and the spline degree.
However, numerical results indicate that the proposed method is not robust
in the spline degree in practice. This is due to boundary effects, which cannot be
captured by local Fourier analysis.
Similar techniques have been used to construct symbol-based multigrid
approaches for IgA, see~\cite{Donatelli:2014a,Donatelli:2014b,Donatelli:2015}.

In the present paper, we take a closer look at the origin of these boundary effects
and introduce a boundary correction that deals with these effects. We prove that
the multigrid method equipped with a properly corrected mass-Richardson smoother converges
robustly both in the grid size and the spline degree
for one- and two-dimensional problems.
For the proof, we make use of the results of the recent paper \cite{Takacs:Takacs:2015},
where robust approximation error estimates and robust inverse
estimates for splines of maximum smoothness have been shown.
We present numerical results that illustrate the theoretical results.
Throughout the paper we restrict ourselves to the case of splines with maximum continuity,
which is of particular interest in IgA.

The bulk of our analysis is first carried out in the one-dimensional setting.
While multigrid solvers are typically not interesting in this setting from
a practical point of view, the tensor product structure of the spline spaces
commonly used in IgA lends itself very well to first analyzing the
one-dimensional case and then extending the results into higher dimensions.
In the present work, we extend the ideas from the one-dimensional to the
two-dimensional case and obtain a robust and efficiently realizable smoother
for that setting.

The paper is organized as follows.
In Section~\ref{sec:2}, we introduce the spline spaces used for the discretization and the elliptic model problem.
A general multigrid framework is introduced in Section~\ref{sec:mg}.
This framework requires a proper choice of the smoother,
which is discussed in detail for one-dimensional domains
in Section~\ref{sec:1d}. In Section~\ref{sec:2d}, we
extend these results to the case of two-dimensional domains.
Some details on the numerical realization of the proposed smoother as well as
numerical experiments illustrating the theory are given in Section~\ref{sec:num}.
In Section~\ref{sec:conc}, we close with some concluding remarks.
A few proofs are postponed to the Appendix.


\section{Preliminaries}\label{sec:2}

\subsection{B-splines and tensor product B-splines}\label{subsec:2a}

First, we consider the case of one-dimensional domains and assume,
without loss of generality, $\Omega = (0,1)$.
We introduce for any $\ell\in \mathbb{N}_0=\{0,1,2,\ldots\}$ a uniform subdivision (grid)
by splitting $\Omega$ into $n_\ell=n_0 2^\ell$ subintervals of length $h_\ell:=\tfrac1{n_\ell} = \tfrac1{n_0}2^{-\ell}$
by uniform dyadic refinement.
On these grids we introduce spaces of spline functions with maximum smoothness as follows.

\begin{definition}
  \label{def:1}
        For $p\in \mathbb{N}:=\{1,2,3,\ldots\}$,
	$S_{p,\ell}(0,1)$ is the space of all functions in $C^{p-1}(0,1)$ which are
	polynomials of degree $p$ on each subinterval $((i-1) h_\ell, i h_\ell)$ for $i=1,\ldots,n_\ell$.
        Here $C^m(\Omega)$ denotes the space of all continuous functions mapping
        $\Omega\rightarrow \mathbb{R}$ that are $m$ times continuously differentiable.
\end{definition}

We assume that the coarsest grid is chosen such that $n_0>p$ holds.
Note that the spaces are nested for fixed spline degree $p$,
that is, $S_{p,\ell-1}(0,1) \subset S_{p,\ell}(0,1)$,
and the number of degrees of freedom roughly doubles in each refinement step.
The parameter $\ell$ will play the role of the grid level in the construction
of our multigrid algorithm.

As a basis for $S_{p,\ell}(\Omega)$, we choose B-splines as
described by, e.g., de Boor \cite{DeBoor:Practical}.
To this end, we introduce an open knot vector
with the first and last knot repeated $p+1$ times each,
$
    (0,\ldots,0, h_\ell, 2h_\ell, \ldots, (n_\ell-1)h_\ell, 1,\ldots,1),
$
and define the normalized B-spline basis over this knot vector in the standard way.
We denote the B-spline basis functions by
$
	\{\varphi_{p,\ell}^{(1)},\ldots,\varphi_{p,\ell}^{(m_\ell)}\},
$
where $m_\ell=\dim S_{p,\ell}(\Omega) = n_\ell+p$.
They form a partition of unity, that is,
$\sum_{j=1}^{m_\ell} \varphi_{p,\ell}^{(j)}(x) = 1$ for all $x \in \Omega$.

In higher dimensions, we will assume $\Omega = (0,1)^d$ with $d>1$
and introduce tensor product B-spline basis functions
of the form $(x,y) \mapsto \varphi_{p,\ell}^{(j_1)}(x) \varphi_{p,\ell}^{(j_2)}(y)$
for two dimensions and analogous functions for higher dimensions.
The space spanned by these basis functions will again be denoted by $S_{p,\ell}(\Omega)$.
The extension to the case where
$\Omega \subset \mathbb R^d$ is the tensor product
of $d$ arbitrary bounded and open intervals is straightforward.
Similarly, it is no problem to have different spline degrees, grid sizes,
and/or number of subintervals for each dimension,
as long as the grid sizes are approximately equal in all directions.
However, for the sake of simplicity of the notation, we will always
assume that the discretization is identical in each coordinate direction.

\subsection{Model problem}\label{subsec:2b}

For the sake of simplicity, we restrict ourselves to the following model problem.
Let $\Omega = (0,1)^d$ and assume $f\in L^2(\Omega)$ to be a given function. Find
a function $u:\Omega \rightarrow \mathbb{R}$ such that
\begin{equation*}
			- \Delta u +u = f  \mbox{ in } \Omega,\qquad	\frac{\partial u}{\partial n} = 0  \mbox{ on } \partial\Omega.
\end{equation*}
In variational form, this problem reads:
find $u \in V:=H^1(\Omega)$ such that
\[
     \underbrace{( \nabla u, \nabla v )_{L^2(\Omega)} + (  u, v )_{L^2(\Omega)} }_{\displaystyle (u,v)_{A} :=}
    =
    \underbrace{( f, v )_{L^2(\Omega)} }_{\displaystyle  \langle  f, v \rangle := }
    \qquad \forall v \in V.
\]
Here and in what follows, $L^2(\Omega)$ is the standard Lebesgue space of square integrable
functions and $H^1(\Omega)$ denotes the standard Sobolev space of weakly differentiable functions
with derivatives in $L^2(\Omega)$.

Applying a Galerkin discretization using spline spaces,
we obtain the following discrete problem:
find $u_\ell \in V_{\ell} := S_{p,\ell}(\Omega) \subset V$ such that
\begin{equation*}
	( u_\ell, v_\ell)_{A} = \langle  f, v_\ell \rangle \qquad \forall v_\ell \in V_{\ell}.
\end{equation*}

Using the B-spline basis introduced in the previous subsection,
the discretized problem can be rewritten in matrix-vector notation,
\begin{equation}\label{eq:linear:system}
			A_\ell \ul{u}_\ell = \ul{f}_\ell,
\end{equation}
where $A_\ell$ is the B-spline stiffness matrix. Here and in what follows, underlined symbols for
primal variables, like $\ul{u}_\ell$ and $\ul{v}_\ell$, are the coefficient vectors representing
the corresponding functions $u_\ell$ and $v_\ell$ with respect to the B-spline basis. 

\begin{remark}\label{rem:1}
Our model problem may seem rather restrictive in the sense
that we only allow for tensor product domains.
In practical IgA problems, one often uses such domains as parameter domains
and introduces a geometry mapping, usually given in the same basis as used
for the discretization, which maps the parameter domain to the actual
physical domain of interest.
As long as the geometry mapping is well-behaved, a good solver on the
parameter domain can be used as a preconditioner for the problem on
the physical domain, and our model problem therefore captures all essential difficulties
that arise in the construction of multigrid methods in this more general setting.
Geometry mappings with singularities as well as multi-patch domains are beyond the
scope of this paper.
\end{remark}


\section{A multigrid solver framework}\label{sec:mg}

\subsection{Description of the multigrid algorithm}\label{subsec:3a}

We now introduce a standard multigrid algorithm for solving the discretized
equation~\eqref{eq:linear:system} on grid level~$\ell$.
Starting from an initial approximation~$\ul{u}^{(0)}_\ell$,
the next iterate $\ul{u}^{(1)}_\ell$ is obtained by the following two steps:
\begin{itemize}
    \item \emph{Smoothing procedure:}
        For some fixed number $\nu$ of smoothing steps, compute
              \begin{equation} \label{eq:sm}
                   \ul{u}^{(0,m)}_\ell := \ul{u}^{(0,m-1)}_\ell + \tau L_\ell^{-1}
                                    \left(\ul{f}_\ell -A_\ell\;\ul{u}^{(0,m-1)}_\ell\right)
                                    \qquad \mbox{for } m=1,\ldots,\nu,
              \end{equation}
              where $\ul{u}^{(0,0)}_\ell := \ul{u}^{(0)}_\ell$. The choice of
              the matrix $L_\ell$ and the damping parameter $\tau>0$ will be discussed below. 
    \item \emph{Coarse-grid correction:}
        \begin{itemize}
             \item Compute the defect and restrict it to grid level $\ell-1$ using a restriction matrix $I_\ell^{\ell-1}$:
                \[
                      \ul{r}_{\ell-1}^{(1)} := I_\ell^{\ell-1} \left(\ul{f}_\ell - A_\ell
                      \;\ul{u}^{(0,\nu)}_\ell\right).
                \]
             \item Compute the correction $\ul{p}_{\ell-1}^{(1)}$ by approximately solving the coarse-grid problem
                \begin{equation}\label{eq:coarse:grid:problem}
                    A_{\ell-1} \,\ul{p}_{\ell-1}^{(1)} =\ul{r}_{\ell-1}^{(1)}.
                \end{equation}
             \item Prolongate $\ul{p}_{\ell-1}^{(1)}$ to the grid level $\ell$ using a prolongation
                  matrix $I^\ell_{\ell-1}$ and add the result to the previous iterate:
                  \begin{equation}\nonumber
                       \ul{u}_{\ell}^{(1)} := \ul{u}^{(0,\nu)}_\ell +
                        I_{\ell-1}^\ell \, \ul{p}_{\ell-1}^{(1)}.
                  \end{equation}
        \end{itemize}
\end{itemize}
As we have assumed nested spaces, the intergrid transfer
matrices $I_{\ell-1}^\ell$ and $I_\ell^{\ell-1}$ can be chosen in
a canonical way:
$I_{\ell-1}^\ell$ is the canonical embedding and the restriction $I_\ell^{\ell-1}$
its transpose.
In the IgA setting, the prolongation matrix $I_{\ell-1}^\ell$ can be computed by means of
knot insertion algorithms.

If the problem \eqref{eq:coarse:grid:problem} on the coarser grid is solved exactly
(\emph{two-grid method}), the coarse-grid correction is given by
\begin{equation} \label{eq:method:cga}
        \ul{u}_\ell^{(1)} := \ul{u}_\ell^{(0,\nu)} +
        I_{\ell-1}^{\ell} \, A_{\ell-1}^{-1} \,  I_{\ell}^{\ell-1}
        \left( \ul{f}_\ell - A_\ell \;\ul{u}_\ell^{(0,\nu)}\right).
\end{equation}
In practice, the problem~\eqref{eq:coarse:grid:problem} is
approximately solved by recursively applying one step (\emph{V-cycle})
or two steps (\emph{W-cycle}) of the multigrid method. On
the coarsest grid level ($\ell=0$), the problem~\eqref{eq:coarse:grid:problem} is
solved exactly by means of a direct method.

The crucial remaining task is the choice of the
smoother. For multigrid methods for elliptic problems with a FEM discretization,
it is common to use a damped Jacobi iteration or a Gauss-Seidel iteration as the smoother.
This is also possible if an isogeometric discretization is used,
but leads to significant deterioration in the convergence rates
as $p$ is increased (\cite{GahalautEtAl:2013,HofreitherZulehner:2014c}).
This motivates to choose a non-standard smoother. In the sequel of this section,
we derive conditions on the matrix $L_\ell$, guaranteeing convergence, which
we use in the following two sections to construct a smoother for the model problem.

\subsection{Abstract multigrid convergence theory}\label{subsec:3b}

As the two-grid method is a linear iteration scheme, its action on the error is
fully described by the iteration matrix.
Let $\ul u_\ell^*$ denote the exact solution of \eqref{eq:linear:system}.
Then the initial error $\ul u_\ell^* - \ul u_\ell^{(0)}$
and the error after one two-grid cycle $\ul u_\ell^* - \ul u_\ell^{(1)}$ are
related by the equation
\[
    \ul u_\ell^* - \ul u_\ell^{(1)} =  (I-T_{\ell-1}) S_\ell^{\nu} \, (\ul u_\ell^* - \ul u_\ell^{(0)}),
\]
where
\[
	S_\ell = I - \tau L_\ell^{-1} A_\ell,
  \qquad
	T_{\ell-1} = I_{\ell-1}^{\ell} \, A_{\ell-1}^{-1} \,  I_{\ell}^{\ell-1} A_\ell.
\]
Observe that $S_\ell$ and $I - T_{\ell-1}$
are the iteration matrices of the smoother and
of the coarse-grid correction, respectively.

Throughout the paper we use the following notations.
For a symmetric and positive definite matrix $Q_\ell$ and
a vector $\ul{u}_\ell$, we define $\|\ul{u}_\ell\|_{Q_\ell} := (Q_\ell\ul{u}_\ell,\ul{u}_\ell)^{1/2} = \|Q_\ell^{1/2}\ul{u}_\ell\|$,
where $\|\cdot\|$ and $(\cdot,\cdot)$ are the Euclidean norm and scalar product, respectively.
The associated matrix norms are denoted by the same symbols $\|\cdot\|_{Q_\ell}$ and $\|\cdot\|$.
For ease of notation, we use the same symbols also for the norm of functions $u_\ell \in V_\ell$ and mean in
this case their application to the coefficient vector, i.e.,
$\| u_\ell \|_{Q_\ell} := \| \underline u_\ell \|_{Q_\ell}$.
As $A_\ell$ is the matrix representation of the bilinear form $(\cdot,\cdot)_A$, we have
$\|  u_\ell \|_{A_\ell} =\| u_\ell \|_{A}$, where
$\| \cdot \|_{A}:=(\cdot, \cdot )_{A}^{1/2}$ is the energy norm.

We assume that the matrix $L_\ell$ appearing in \eqref{eq:sm} is symmetric and positive definite.
Following the classical line of, e.g., \cite{Brenner:1996},
we show convergence in that norm, i.e., we show that
\begin{equation}\label{eq:norm:estim}
    q := \|(I-T_{\ell-1}) S_\ell^{\nu}\|_{L_\ell} < 1,
\end{equation}
which obviously implies the $q$-linear convergence property
\begin{equation*}
	\|\ul u_\ell^* - \ul u_\ell^{(1)}\|_{L_\ell} \le q \| \ul u_\ell^* - \ul u_\ell^{(0)} \|_{L_\ell}.
\end{equation*}
To analyze \eqref{eq:norm:estim}, we use semi-multiplicity of norms and obtain
\begin{align}
		 \| (I-T_{\ell-1}) S_\ell^{\nu}\|_{L_\ell}  &= \|L_\ell^{1/2} (I-T_{\ell-1}) S_\ell^{\nu}L_\ell^{-1/2} \| \nonumber\\
		& \le \|L_\ell^{1/2} (I-T_{\ell-1}) A_\ell^{-1} L_\ell^{1/2} \|\;\|L_\ell^{-1/2} A_\ell S_\ell^{\nu}L_\ell^{-1/2} \|.
			\label{eq:decomp}
\end{align}
Therefore, it suffices to prove the two conditions
\begin{align}
    \|L_\ell^{1/2} (I-T_{\ell-1}) A_\ell^{-1} L_\ell^{1/2} \|\ &\le C_A,
    \qquad \text{(approximation property)} \label{eq:ap}
    \\
    \|L_\ell^{-1/2} A_\ell S_\ell^{\nu}L_\ell^{-1/2} \| &\le C_S \nu^{-1},
    \qquad \text{(smoothing property)} \label{eq:sp}
\end{align}
cf.~\cite[eq.~(6.1.5)]{Hackbusch:1985} for this splitting.
Then \eqref{eq:norm:estim} follows immediately for $\nu>C_A C_S$, that is, if sufficiently
many smoothing steps are applied. As we are interested in robust convergence,
the constants $C_A$ and $C_S$ should be independent of both
the grid size and the spline degree.

The approximation property \eqref{eq:ap} is a direct consequence of the following
approximation error estimate.
Observe that $T_{\ell-1}$ is the matrix representation of the $A$-orthogonal projector from $V_\ell$ to $V_{\ell-1}$.
For simplicity, we use the same symbol $T_{\ell-1}$ also for the $A$-orthogonal projector from $V_\ell$ to $V_{\ell-1}$
itself and, later on, also for the $A$-orthogonal projector from the whole space $V = H^1(\Omega)$ to $V_{\ell-1}$.

\begin{lemma}\label{lem:mg-approx}
    The approximation error estimate
    \begin{equation}\label{eq:ap:2}
        \|  (I-T_{\ell-1})  u_\ell \|_{L_\ell}^2 \le C_A \|  u_\ell \|_{A}^2
        \qquad
        \forall  u_\ell \in V_\ell
    \end{equation}
    is equivalent to the approximation property \eqref{eq:ap} with the same constant $C_A$.
\end{lemma}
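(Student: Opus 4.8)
The plan is to reduce both statements to a single quantity, the spectral norm of the matrix $M := L_\ell^{1/2}(I-T_{\ell-1})A_\ell^{-1/2}$, and to show that each estimate is equivalent to $\|M\|^2 \le C_A$. Since $A_\ell$ is symmetric positive definite, $A_\ell^{1/2}$ and $A_\ell^{-1/2}$ are well defined, so this substitution is the natural device for passing between a norm inequality over all $u_\ell\in V_\ell$ and an operator-norm bound.

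First I would rewrite the approximation error estimate \eqref{eq:ap:2} purely in matrix terms. By the conventions fixed above, $\|(I-T_{\ell-1})u_\ell\|_{L_\ell}^2 = \|L_\ell^{1/2}(I-T_{\ell-1})\ul u_\ell\|^2$ and $\|u_\ell\|_{A}^2 = \|A_\ell^{1/2}\ul u_\ell\|^2$. As $A_\ell^{-1/2}$ is a bijection on the coefficient space, substituting $\ul u_\ell = A_\ell^{-1/2}\ul w_\ell$ turns \eqref{eq:ap:2} into $\|M\ul w_\ell\|^2 \le C_A\|\ul w_\ell\|^2$ for all $\ul w_\ell$, which is precisely $\|M\|^2 \le C_A$.

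Next I would connect this to the approximation property \eqref{eq:ap}. The key structural fact is that $T_{\ell-1}$ is the matrix of the $A_\ell$-orthogonal projector onto $V_{\ell-1}$: using the Galerkin identity $A_{\ell-1} = I_\ell^{\ell-1}A_\ell I_{\ell-1}^\ell$ together with $I_\ell^{\ell-1} = (I_{\ell-1}^\ell)^T$, one checks directly that $T_{\ell-1}^2 = T_{\ell-1}$ and that $A_\ell T_{\ell-1}$ is symmetric. Writing $P := I - T_{\ell-1}$, these give $P^2 = P$ and $A_\ell P = P^T A_\ell$, hence the commutation relation $P A_\ell^{-1} = A_\ell^{-1}P^T$. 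I would then compute $M M^T = L_\ell^{1/2} P A_\ell^{-1} P^T L_\ell^{1/2}$ and simplify the middle factor via $P A_\ell^{-1}P^T = P(A_\ell^{-1}P^T) = P(P A_\ell^{-1}) = P^2 A_\ell^{-1} = P A_\ell^{-1}$, so that $M M^T = L_\ell^{1/2}(I-T_{\ell-1})A_\ell^{-1}L_\ell^{1/2}$ is exactly the matrix appearing in \eqref{eq:ap}. Since $\|M M^T\| = \|M\|^2$ in the spectral norm, the property \eqref{eq:ap} also reads $\|M\|^2 \le C_A$, and the equivalence with the same constant follows.

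The norm bookkeeping is routine; the one genuinely load-bearing step is the identity $P A_\ell^{-1}P^T = P A_\ell^{-1}$, which is where the projector nature of $T_{\ell-1}$ enters. I expect that to be the main point to get right, since it relies on both $P^2 = P$ and the $A_\ell$-self-adjointness $A_\ell P = P^T A_\ell$; without the Galerkin consistency of $A_{\ell-1}$ neither would hold, and the two norms would only be comparable up to constants rather than equal with the same $C_A$.
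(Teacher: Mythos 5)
Your proposal is correct and follows essentially the same route as the paper: both reduce the equivalence to the operator-norm identity $\|X\|^2=\|XX^T\|$ for $X=L_\ell^{1/2}(I-T_{\ell-1})A_\ell^{-1/2}$ and then simplify $XX^T$ using the $A_\ell$-self-adjointness and idempotency of $I-T_{\ell-1}$. The only difference is that you spell out the verification of these projector properties via the Galerkin identity, which the paper assumes as already established.
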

\begin{proof}
    The estimate (\ref{eq:ap:2}) can be rewritten as
    \begin{equation}\nonumber
		    \| X_\ell \| \le C_A^{1/2}
		    \quad \text{with} \quad X_\ell := L_\ell^{1/2} (I-T_{\ell-1}) A_\ell^{-1/2},
    \end{equation}
    which is equivalent to $\| X_\ell X_\ell^T \| \le C_A$.
    We have
    \[
        X_\ell X_\ell^T
        = L_\ell^{1/2} (I-T_{\ell-1}) A_\ell^{-1} (I-T_{\ell-1})^T L_\ell^{1/2}
        = L_\ell^{1/2} (I-T_{\ell-1})^2 A_\ell^{-1} L_\ell^{1/2},
    \]
    where we used $((I-T_{\ell-1}) A_\ell^{-1})^T = (I-T_{\ell-1}) A_\ell^{-1}$,
    which follows from $I-T_{\ell-1}$ being self-adjoint with respect to $A_\ell$.
    Since $I-T_{\ell-1}$ is a projector, the statement follows. \qed
\end{proof}

The following lemma states that the smoothing property is a direct consequence of
an inverse inequality.
\begin{lemma}\label{lem:smooth}
	Let $A_\ell$ and $L_\ell$ be symmetric and positive definite matrices.
  	Assume that the inequality
	\begin{equation}\label{eq:sp:2}
      \|u_\ell\|_{A}^2 \le C_I \|u_\ell\|_{L_\ell}^2 \qquad \forall u_\ell \in V_\ell
	\end{equation}
	holds. Then for $\tau\in(0,C_I^{-1}]$, the smoother~\eqref{eq:sm} satisfies the
	smoothing property~\eqref{eq:sp} with $C_S=\tau^{-1}$. Under the same assumptions,
	 $\|S_\ell\|_{A_\ell} \le 1 $ holds.
\end{lemma}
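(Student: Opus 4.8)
The plan is to reduce everything to a single symmetric matrix and to read off the spectral information from the inverse inequality \eqref{eq:sp:2}. First I would rewrite \eqref{eq:sp:2} in matrix form: since $\|u_\ell\|_A^2 = (A_\ell \ul u_\ell, \ul u_\ell)$ and $\|u_\ell\|_{L_\ell}^2 = (L_\ell \ul u_\ell, \ul u_\ell)$, the hypothesis says $(A_\ell \ul u_\ell, \ul u_\ell) \le C_I (L_\ell \ul u_\ell, \ul u_\ell)$ for all $\ul u_\ell$, i.e. $A_\ell \le C_I L_\ell$ in the sense of the Loewner order. Introducing the symmetrically scaled matrix $\widetilde A_\ell := L_\ell^{-1/2} A_\ell L_\ell^{-1/2}$, which is symmetric positive definite, this is equivalent to the spectral bound $\lambda_{\max}(\widetilde A_\ell) \le C_I$. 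Thus the eigenvalues of $\widetilde A_\ell$ lie in $(0, C_I]$, and for $\tau \in (0, C_I^{-1}]$ the scaled product $\tau \widetilde A_\ell$ has all eigenvalues in $(0,1]$.

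Next I would express both quantities of interest in terms of $\widetilde A_\ell$. The key observation is that $S_\ell = I - \tau L_\ell^{-1} A_\ell$ is similar to the symmetric matrix $L_\ell^{-1/2} A_\ell S_\ell L_\ell^{-1/2}$-type combinations via conjugation by $L_\ell^{1/2}$; concretely, $L_\ell^{1/2} S_\ell L_\ell^{-1/2} = I - \tau \widetilde A_\ell$. For the smoothing property \eqref{eq:sp} the relevant matrix is $L_\ell^{-1/2} A_\ell S_\ell^\nu L_\ell^{-1/2}$, which equals $\widetilde A_\ell (I - \tau \widetilde A_\ell)^\nu$ after conjugation. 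Since $\widetilde A_\ell$ is symmetric, this is a symmetric matrix that is a polynomial in $\widetilde A_\ell$, so its norm is governed by the scalar function $g(\lambda) = \lambda(1-\tau\lambda)^\nu$ evaluated on the spectrum. With $\lambda \in (0, C_I] \subseteq (0, \tau^{-1}]$, I would bound $\max_{\lambda} |g(\lambda)|$ by the standard estimate $\max_{t \in (0,1]} t(1-t)^\nu \le \frac{1}{\nu+1} \le \nu^{-1}$ after substituting $t = \tau\lambda$; this yields $\|L_\ell^{-1/2} A_\ell S_\ell^\nu L_\ell^{-1/2}\| \le \tau^{-1}\,\nu^{-1}$, giving $C_S = \tau^{-1}$.

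Finally, for the bound $\|S_\ell\|_{A_\ell} \le 1$, I would again conjugate into the symmetric setting: $\|S_\ell\|_{A_\ell} = \|A_\ell^{1/2} S_\ell A_\ell^{-1/2}\|$, and using $A_\ell^{1/2} S_\ell A_\ell^{-1/2} = I - \tau A_\ell^{1/2} L_\ell^{-1} A_\ell^{1/2}$, the norm is determined by the eigenvalues $1 - \tau\mu$ where $\mu$ ranges over the spectrum of $A_\ell^{1/2} L_\ell^{-1} A_\ell^{1/2}$, which coincides with the spectrum of $\widetilde A_\ell$ and hence lies in $(0, C_I]$. For $\tau \le C_I^{-1}$ each $1 - \tau\mu$ lies in $[0,1)$, so the spectral radius and norm are bounded by $1$. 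The main obstacle here is largely bookkeeping: one must be careful that conjugation by $L_\ell^{1/2}$ (rather than $A_\ell^{1/2}$) is the correct similarity transformation turning $S_\ell$ into a symmetric matrix so that operator norms reduce to eigenvalue bounds, and that the two statements require conjugation by two different matrices. Once the reduction to the scalar optimization $\max_t t(1-t)^\nu$ is in place, no genuine difficulty remains.
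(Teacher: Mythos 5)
Your proposal is correct and follows essentially the same route as the paper: the paper likewise conjugates by $L_\ell^{1/2}$ to reduce the smoothing property to the spectral radius of the symmetric matrix $\bar A_\ell (I-\tau\bar A_\ell)^\nu$ with $\bar A_\ell = L_\ell^{-1/2}A_\ell L_\ell^{-1/2}$, bounds it by $\tau^{-1}\sup_{\mu\in[0,1]}\mu(1-\mu)^\nu \le \tau^{-1}/\nu$, and obtains $\|S_\ell\|_{A_\ell}=\rho(I-\tau\bar A_\ell)\le 1$ from the same spectral containment.
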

\begin{proof}
    The proof is based on \cite[Lemma~6.2.1]{Hackbusch:1985}.
    From~\eqref{eq:sp:2}, we immediately obtain $\tau^{-1} \ge \|A_\ell^{1/2} L_\ell^{-1/2} \|^2 = \rho(L_\ell^{-1/2} A_\ell L_\ell^{-1/2})$,
    where $\rho(\cdot)$ is the spectral radius.
    Observe that
    \[
        L_\ell^{-1/2} A_\ell S_\ell^{\nu} L_\ell^{-1/2}
				= L_\ell^{-1/2} A_\ell (I-\tau L_\ell^{-1} A_\ell)^{\nu} L_\ell^{-1/2}
				= \bar{A}_\ell (I-\tau \bar{A}_\ell)^{\nu}
    \]
    with $\bar{A}_\ell:=L_\ell^{-1/2} A_\ell L_\ell^{-1/2}$ and
    that $\bar{A}_\ell (I-\tau \bar{A}_\ell)^{\nu}$ is symmetric.
    Furthermore, we have the spectral radius bound
    $\rho(\tau \bar{A}_\ell) =\tau \rho( L_\ell^{-1/2} A_\ell L_\ell^{-1/2}) \le 1$.
    Thus,
		\begin{align*}
        &\| L_\ell^{-1/2} A_\ell S_\ell^{\nu} L_\ell^{-1/2} \| \\
				&\qquad = \rho(\bar{A}_\ell (I-\tau \bar{A}_\ell)^{\nu})
				= \sup_{\lambda\in \sigma( \bar{A}_\ell ) } \lambda (1-\tau \lambda)^{\nu}
				\le \tau^{-1} \sup_{\mu \in [0,1] } \mu (1-\mu)^{\nu} \\
				&\qquad= \tau^{-1} \left(\frac{\nu}{1 + \nu}\right)^{\nu} \frac{1}{1 + \nu}
				\le  \frac{\tau^{-1}}{ \nu},
		\end{align*}
    which shows \eqref{eq:sp}. Here $\sigma(\cdot)$ denotes the spectrum of a matrix. Similarly we have $\|S_\ell\|_{A_\ell} = \rho(I-\tau \bar{A}_\ell) \le 1 $.
\qed\end{proof}

Observe that estimate (\ref{eq:sp:2}) is, indeed, of the typical form of an inverse inequality,
if $L_\ell$ is a (properly scaled) mass matrix representing the $L^2$ inner product.

In view of Lemmas~\ref{lem:mg-approx} and~\ref{lem:smooth}, we can state sufficient conditions
for the convergence of a two-grid method.

\begin{theorem}\label{thrm:two:grid:gen1}
    Assume that there are constants $C_A$ and $C_I$, independent of the grid size and
    the spline degree, such that the approximation error estimate~\eqref{eq:ap:2}
    and the inverse inequality~\eqref{eq:sp:2} hold.
	Then the two-grid method converges for the choice $\tau\in(0, C_I^{-1}]$ and $\nu > \nu_0 := \tau^{-1}C_A$
	with rate $q = \nu_0/\nu$.
\end{theorem}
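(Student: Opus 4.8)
The plan is to assemble the theorem directly from the two preceding lemmas via the sub-multiplicative splitting already established in~\eqref{eq:decomp}. The starting point is the observation that the quantity governing convergence is exactly $q = \|(I-T_{\ell-1}) S_\ell^{\nu}\|_{L_\ell}$ from~\eqref{eq:norm:estim}, and that~\eqref{eq:decomp} bounds this by the product of the two factors appearing in the approximation property~\eqref{eq:ap} and the smoothing property~\eqref{eq:sp}. It therefore suffices to control each factor separately under the stated hypotheses and multiply.

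First I would invoke Lemma~\ref{lem:mg-approx}: since the approximation error estimate~\eqref{eq:ap:2} holds with constant $C_A$ by assumption, the equivalent approximation property~\eqref{eq:ap} yields $\|L_\ell^{1/2} (I-T_{\ell-1}) A_\ell^{-1} L_\ell^{1/2}\| \le C_A$ with the same constant. Next I would apply Lemma~\ref{lem:smooth}: because the inverse inequality~\eqref{eq:sp:2} holds with constant $C_I$ and we have chosen $\tau \in (0, C_I^{-1}]$, the smoother satisfies the smoothing property~\eqref{eq:sp} with $C_S = \tau^{-1}$, that is, $\|L_\ell^{-1/2} A_\ell S_\ell^{\nu} L_\ell^{-1/2}\| \le \tau^{-1}\nu^{-1}$.

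Substituting both bounds into~\eqref{eq:decomp} then gives
\[
    q = \|(I-T_{\ell-1}) S_\ell^{\nu}\|_{L_\ell} \le C_A \cdot \tau^{-1}\nu^{-1} = \frac{\tau^{-1} C_A}{\nu} = \frac{\nu_0}{\nu},
\]
which is precisely the claimed rate once we recall the definition $\nu_0 := \tau^{-1} C_A$. The condition $\nu > \nu_0$ immediately forces $q < 1$, establishing~\eqref{eq:norm:estim} and hence the asserted $q$-linear convergence; moreover, since $C_A$ and $C_I$ are independent of the grid size and spline degree, so is the resulting rate $q$.

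Honestly, there is no substantial obstacle here: the entire analytic content has already been packaged into Lemmas~\ref{lem:mg-approx} and~\ref{lem:smooth}, and the theorem is their mechanical composition through~\eqref{eq:decomp}. The only point requiring a little care is the bookkeeping of constants — ensuring that the $C_A$ from the approximation side and the $\tau^{-1}$ from the smoothing side combine to give exactly $\nu_0/\nu$, and that the admissible range $\tau \in (0, C_I^{-1}]$ is precisely what Lemma~\ref{lem:smooth} demands. I would also emphasize that the $\nu^{-1}$ dependence of the bound, inherited from the smoothing factor, is what makes $q<1$ achievable by taking $\nu$ sufficiently large.
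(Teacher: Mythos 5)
Your proposal is correct and follows exactly the same route as the paper, whose own proof is the one-line remark that the theorem "follows directly from~\eqref{eq:decomp}, Lemma~\ref{lem:mg-approx}, and Lemma~\ref{lem:smooth}"; you have simply written out that composition explicitly, with the correct constant bookkeeping $q \le C_A \tau^{-1} \nu^{-1} = \nu_0/\nu$. No gaps.
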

\begin{proof}
			The statement on the convergence of the two-grid method follows directly
			from~\eqref{eq:decomp}, Lemma~\ref{lem:mg-approx}, and Lemma~\ref{lem:smooth}.
\qed\end{proof}

The assumptions of Theorem~\ref{thrm:two:grid:gen1} are also sufficient to prove convergence of
a symmetrical variant of the W-cycle multigrid method.
Here, in addition to the smoothing steps before the coarse-grid correction (\textit{pre-smoothing}),
we perform an equal number of smoothing steps after the coarse-grid correction (\textit{post-smoothing}).

\begin{theorem}\label{thrm:two:grid:gen1a}
			Under the assumptions of Theorem~\ref{thrm:two:grid:gen1}, the W-cycle multi\-grid method with
			$\nu/2$ pre- and $\nu/2$ post-smoothing steps converges in the energy norm $\|\cdot\|_A$ for the choice
			$\tau\in(0, C_I^{-1}]$ and $\nu > 4 \nu_0 = 4 \tau^{-1}C_A$ with rate $q = 2\nu_0/\nu$.
\end{theorem}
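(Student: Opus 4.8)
The plan is to view the W-cycle as a perturbation of the symmetric two-grid method and to close a scalar recursion for the energy-norm contraction factors $\sigma_\ell := \|M_\ell\|_{A_\ell}$, where $M_\ell$ is the iteration matrix of the W-cycle on level $\ell$ and $M_0 = 0$, since the coarsest problem is solved exactly. The target is to show $\sigma_\ell \le 2\nu_0/\nu$ uniformly in $\ell$, which is the claimed convergence with rate $q = 2\nu_0/\nu < 1$.

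First I would prove a \emph{sharp} two-grid estimate in the energy norm. With $\nu/2$ pre- and post-smoothing and an exact coarse solve, the iteration matrix is $M_\ell^{TG} = S_\ell^{\nu/2}(I - T_{\ell-1}) S_\ell^{\nu/2}$. Since $S_\ell$ is $A_\ell$-self-adjoint and $I - T_{\ell-1}$ is an $A_\ell$-self-adjoint projector, one has $M_\ell^{TG} = Y_\ell^{*} Y_\ell$ in the $A_\ell$-inner product with $Y_\ell := (I-T_{\ell-1}) S_\ell^{\nu/2}$, hence $\|M_\ell^{TG}\|_{A_\ell} = \|Y_\ell\|_{A_\ell}^2$. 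Passing to the symmetrized matrices $\hat P := A_\ell^{1/2}(I-T_{\ell-1})A_\ell^{-1/2}$ and $\hat S := A_\ell^{1/2} S_\ell A_\ell^{-1/2} = I - \tau A_\ell^{1/2} L_\ell^{-1} A_\ell^{1/2}$, I would split $\|Y_\ell\|_{A_\ell} = \|\hat P \hat S^{\nu/2}\| \le \|\hat P (A_\ell^{1/2} L_\ell^{-1} A_\ell^{1/2})^{-1/2}\| \, \|(A_\ell^{1/2} L_\ell^{-1} A_\ell^{1/2})^{1/2} \hat S^{\nu/2}\|$. The first factor satisfies $\|\hat P (A_\ell^{1/2} L_\ell^{-1} A_\ell^{1/2})^{-1/2}\|^2 = \|L_\ell^{1/2}(I-T_{\ell-1})A_\ell^{-1/2}\|^2 \le C_A$ by the approximation property \eqref{eq:ap} (Lemma~\ref{lem:mg-approx}); the second is the scalar maximum $\max_{\lambda} \lambda (1-\tau\lambda)^{\nu} \le (\tau(1+\nu))^{-1}$ already exploited in Lemma~\ref{lem:smooth}, valid because the inverse inequality \eqref{eq:sp:2} and $\tau \le C_I^{-1}$ force the relevant eigenvalues into $[0,1]$. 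Using $C_A C_S = \nu_0$ this yields the sharp bound $q_0 := \|M_\ell^{TG}\|_{A_\ell} \le \nu_0/\nu$.

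Next I would account for the inexact coarse solve. Replacing $A_{\ell-1}^{-1}$ in \eqref{eq:method:cga} by two recursive W-cycle steps applied to \eqref{eq:coarse:grid:problem} with zero initial guess replaces $A_{\ell-1}^{-1}$ by $(I - M_{\ell-1}^2) A_{\ell-1}^{-1}$, so that $M_\ell = M_\ell^{TG} + S_\ell^{\nu/2} I_{\ell-1}^{\ell} M_{\ell-1}^2 A_{\ell-1}^{-1} I_\ell^{\ell-1} A_\ell S_\ell^{\nu/2}$. To bound the second term I would use three facts: $\|S_\ell\|_{A_\ell} \le 1$ (Lemma~\ref{lem:smooth}); the embedding $I_{\ell-1}^{\ell}$ is an isometry from $\|\cdot\|_{A_{\ell-1}}$ to $\|\cdot\|_{A_\ell}$, a consequence of the Galerkin identity $A_{\ell-1} = I_\ell^{\ell-1} A_\ell I_{\ell-1}^{\ell}$; and $T_{\ell-1} = I_{\ell-1}^{\ell} A_{\ell-1}^{-1} I_\ell^{\ell-1} A_\ell$ is an $A_\ell$-orthogonal projector of unit norm. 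Tracking these factors for an arbitrary $u_\ell$, and noting $I_{\ell-1}^{\ell} A_{\ell-1}^{-1} I_\ell^{\ell-1} A_\ell S_\ell^{\nu/2} u_\ell = T_{\ell-1} S_\ell^{\nu/2} u_\ell$, gives $\|S_\ell^{\nu/2} I_{\ell-1}^{\ell} M_{\ell-1}^2 A_{\ell-1}^{-1} I_\ell^{\ell-1} A_\ell S_\ell^{\nu/2} u_\ell\|_{A_\ell} \le \sigma_{\ell-1}^2 \|u_\ell\|_{A_\ell}$, so that $\sigma_\ell \le q_0 + \sigma_{\ell-1}^2$ with the clean constant $1$ in front of $\sigma_{\ell-1}^2$.

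Finally I would close the induction. As $M_0 = 0$, the base case $\sigma_1 = q_0 \le \nu_0/\nu \le 2\nu_0/\nu$ holds. Assuming $\sigma_{\ell-1} \le 2\nu_0/\nu$ and using $\nu > 4\nu_0$, the recursion yields $\sigma_\ell \le \nu_0/\nu + (2\nu_0/\nu)^2 = (\nu_0/\nu)(1 + 4\nu_0/\nu) \le 2\nu_0/\nu$, completing the proof. I expect the main obstacle to be the two-grid step: a naive product bound on $M_\ell^{TG}$ only delivers $\sqrt{\nu_0/\nu}$, and extracting the sharp linear factor $\nu_0/\nu$ — which is precisely what makes $2\nu_0/\nu$ a supersolution of $\sigma \mapsto q_0 + \sigma^2$ exactly when $\nu \ge 4\nu_0$ — requires exploiting the symmetric pre-/post-smoothing through the identity $\|M_\ell^{TG}\|_{A_\ell} = \|(I-T_{\ell-1}) S_\ell^{\nu/2}\|_{A_\ell}^2$ together with the symmetrized split above. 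The only other point needing care is securing the constant $1$ in the recursion, which hinges on the prolongation being an energy isometry and on $T_{\ell-1}$ having unit energy norm.
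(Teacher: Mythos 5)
Your proposal is correct and follows essentially the same route as the paper: a sharp energy-norm bound $\nu_0/\nu$ for the symmetric two-grid operator, the recursion $\sigma_\ell \le \nu_0/\nu + \sigma_{\ell-1}^2$ obtained from $\|S_\ell\|_{A_\ell}\le 1$ and the Galerkin energy isometry of the prolongation, and the induction closed by $\nu > 4\nu_0$. The only (harmless) deviation is in the two-grid step, where you re-derive the bound via the symmetrized factorization of $\|(I-T_{\ell-1})S_\ell^{\nu/2}\|_{A_\ell}^2$, whereas the paper simply uses that the $A_\ell$-norm of the $A_\ell$-self-adjoint operator $S_\ell^{\nu/2}(I-T_{\ell-1})S_\ell^{\nu/2}$ equals $\rho((I-T_{\ell-1})S_\ell^{\nu})$, which is dominated by the $L_\ell$-norm already bounded in Theorem~\ref{thrm:two:grid:gen1}.
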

\begin{proof}
			Theorem~\ref{thrm:two:grid:gen1} states that
			\begin{equation*}
								\|(I-T_{\ell-1}) S_\ell^{\nu}\|_{L_\ell} \le \frac{\tau^{-1}C_A}{\nu}.
			\end{equation*}
			As $S_\ell^{\nu/2}(I-T_{\ell-1}) S_\ell^{\nu/2}$ is self-adjoint in the scalar
			product $(\cdot,\cdot)_{A_\ell}$, this implies
			\begin{equation}\label{eq:cor:converg:1:pre}
			    \|S_\ell^{\nu/2}(I-T_{\ell-1}) S_\ell^{\nu/2}\|_{A_\ell}
			    =
			    \rho((I-T_{\ell-1}) S_\ell^{\nu})
			    \le
			    \|(I-T_{\ell-1}) S_\ell^{\nu}\|_{L_\ell}
			    \le
			    \frac{\tau^{-1}C_A}{\nu}.
			\end{equation}
			The iteration matrix of the W-cycle multigrid method is recursively given by
			\begin{equation*}
				W_\ell = S_\ell^{\nu/2} (I-I_{\ell-1}^\ell (I-W_{\ell-1}^2) A_{\ell-1}^{-1} I_\ell^{\ell-1} A_\ell) S_\ell^{\nu/2}
				\qquad \mbox{for } \ell > 0,
			\end{equation*}
			and $W_0 = 0$.
			Using the triangle inequality and semi-multiplicativity of norms, we obtain
			for the convergence rate
			\begin{align*}
				q_\ell & = \|W_\ell\|_{A_\ell}
					= \|S_\ell^{\nu/2} (I-I_{\ell-1}^\ell (I-W_{\ell-1}^2) A_{\ell-1}^{-1} I_\ell^{\ell-1} A_\ell) S_\ell^{\nu/2}\|_{A_\ell}\\
					& \le \|S_\ell^{\nu/2} (I-T_{\ell-1}) S_\ell^{\nu/2}\|_{A_\ell}
								+ \|S_\ell^{\nu/2} I_{\ell-1}^\ell W_{\ell-1}^2 A_{\ell-1}^{-1} I_\ell^{\ell-1} A_\ell S_\ell^{\nu/2}\|_{A_\ell}\\
					& \le \|S_\ell^{\nu/2} (I-T_{\ell-1}) S_\ell^{\nu/2}\|_{A_\ell}
								+ \|S_\ell\|_{A_\ell}^{\nu} \| A_\ell^{1/2} I_{\ell-1}^\ell A_{\ell-1}^{-1/2}\|^2 \|W_{\ell-1}\|_{A_{\ell-1}}^2.
			\end{align*}
			As the spaces $V_\ell$ are nested, we obtain $A_{\ell-1}=I^{\ell-1}_\ell A_\ell I_{\ell-1}^\ell$. Thus it follows
			$\| A_\ell^{1/2} I_{\ell-1}^\ell A_{\ell-1}^{-1/2}\|^2 = \rho( I^{\ell-1}_\ell A_\ell I_{\ell-1}^\ell A_{\ell-1}^{-1})=1$.
			Lemma~\ref{lem:smooth} states that $\|S_\ell\|_{A_\ell}\le 1$.
			Using these two statements, \eqref{eq:cor:converg:1:pre} and
			$q_{\ell-1} = \|W_{\ell-1}\|_{A_{\ell-1}}$, we obtain
			\[
				q_\ell \le \frac{\nu_0}{\nu} + q_{\ell-1}^2.
			\]
      Using the assumption $\nu \ge 4 \nu_0$, one shows by induction that
      $q_\ell \le 2\nu_0/\nu$.
\qed\end{proof}


\section{A robust multigrid method for one-dimensional domains}\label{sec:1d}

\subsection{Robust estimates for a subspace of the spline space}

For the model problem, we have $V=H^1(\Omega)$ and
\begin{equation}\label{eq:model:v}
			\|\cdot\|_{A}= \|\cdot\|_{H^1(\Omega)}.
\end{equation}
The standard multigrid convergence analysis as introduced by Hackbusch~\cite{Hackbusch:1985}
gives convergence in a (properly scaled) $L^2$-norm,
\begin{equation}\label{eq:plain:mass}
			\|\cdot\|_{L_\ell} = h_\ell^{-1} \|\cdot\|_{L^2(\Omega)},
\end{equation}
and thus the choice $L_\ell = h_\ell^{-2} M_\ell$, where 
$M_\ell$ is the mass matrix, consisting of pairwise $L^2$-scalar products of the basis functions.

To show robust convergence of the multigrid solver with a Jacobi smoother,
one would show that the diagonal of $A_\ell$ and $h_\ell^{-2} M_\ell$ are spectrally equivalent.
This equivalence holds robustly in the grid size,
however the involved constants deteriorate with increasing spline degree $p$.
This issue is closely related to the so-called condition number of the
B-spline basis (see, e.g., \cite{DeBoor:1972}).
The growth of the condition number with $p$ explains why standard Jacobi iteration
and Gauss-Seidel iteration do not work well for B-splines.

The aforementioned equivalence is not necessary for a direct application of the choice
$L_\ell = h_\ell^{-2} M_\ell$, which is the mass-Richardson smoother
already studied in~\cite{HofreitherZulehner:2014b}.
Local Fourier analysis indicates that this smoother should lead to robust
convergence of the multigrid solver. However, the numerical experiments
in~\cite{HofreitherZulehner:2014b} show that this is not the case
and iteration numbers still deteriorate with $p$. The reason for this effect is motivated as follows.

With the choice~\eqref{eq:model:v} and~\eqref{eq:plain:mass},
the condition~\eqref{eq:sp:2} reads
\begin{equation}
    \label{eq:inveq}
		 \|u_\ell\|_{H^1(\Omega)} \le C_I^{1/2}  h_\ell^{-1} \|u_\ell\|_{L^2(\Omega)}
		 \qquad \forall u_\ell \in V_{\ell}=S_{p,\ell}(\Omega).
\end{equation}
In other words, it is required that the spline space satisfies an inverse inequality with a constant
that is independent of the grid size and the spline degree.
However, such a robust inverse inequality does not hold,
as the counterexample given in \cite{Takacs:Takacs:2015} shows:
for each $p\in \mathbb{N}$ and each $\ell\in \mathbb{N}_0$, there exists a spline
$w_\ell \in S_{p,\ell}(\Omega)$ with
\begin{equation}
    \label{eq:counterex}
    |w_\ell|_{H^1(\Omega)}  \ge p {h_\ell^{-1}\|w_\ell\|_{L^2(\Omega)}}.
\end{equation}
As we have to choose $\tau \le C_I^{-1}$ for the smoother not to diverge, the
existence of such splines $w_\ell$ implies that one has to choose $\tau = \mathcal{O}(p^{-2})$, 
which causes the convergence rates to deteriorate.
The splines $w_\ell$ in the counterexample are non-periodic
and therefore cannot be captured by local Fourier analysis and similar tools which
consider only the periodic setting.

In~\cite{Takacs:Takacs:2015} it was shown that a robust inverse estimate of the form \eqref{eq:inveq}
does hold for the following large subspace of $S_{p,\ell}(\Omega)$.

\begin{definition}
    \label{def:S-tilde}
    Let $\Omega=(0,1)$. We denote by $\widetilde{S}_{p,\ell}(\Omega)$ the space of all
    $u_\ell\in S_{p,\ell}(\Omega)$ whose odd derivatives of order less than $p$ vanish at the boundary,
	\[
        \frac{\partial^{2l+1}}{\partial x^{2l+1}} u_\ell(0)=\frac{\partial^{2l+1}}{\partial x^{2l+1}} u_\ell(1) = 0
        \quad \text{ for all } l \in \mathbb{N}_0 \text{ with } 2l+1 < p.
    \]
\end{definition}
The space $\widetilde{S}_{p,\ell}(\Omega)$ is almost as large as $S_{p,\ell}(\Omega)$:
their dimensions differ by $p$ (for $p$ even) or $p-1$ (for $p$ odd).

\begin{theorem}[\cite{Takacs:Takacs:2015}]
    \label{thrm:inverse}
    For all spline degrees $p\in \mathbb{N}$ and all grid levels $\ell\in\mathbb{N}_0$,
    we have the inverse inequality
    \[
        |u_\ell|_{H^1(\Omega)} \le 2 \sqrt{3} h_\ell^{-1} \|u_\ell \|_{L^2(\Omega)}
        \qquad \forall u_\ell \in \widetilde{S}_{p,\ell}(\Omega).
    \]
\end{theorem}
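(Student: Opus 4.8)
The plan is to eliminate the spline degree from the problem by a symmetry reduction and then to analyse the resulting periodic problem by Fourier techniques. First I would exploit the observation that the boundary conditions defining $\widetilde{S}_{p,\ell}(\Omega)$ are precisely the compatibility conditions for a smooth even reflection. Given $u_\ell \in \widetilde{S}_{p,\ell}(0,1)$, reflect it evenly about $x=0$ to obtain a function on $(-1,1)$ and extend it $2$-periodically to the whole line. Across each integer point the even derivatives of the two pieces match automatically, whereas the odd derivatives match with opposite sign; hence the extension is globally $C^{p-1}$ exactly because all odd derivatives of order less than $p$ vanish at $0$ and $1$. Being piecewise polynomial of degree $p$ on the uniform grid $\{i h_\ell\}$, the extension $u_\ell^{\mathrm{ext}}$ is therefore a genuine periodic spline of maximal smoothness. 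Since even reflection scales $\int (u_\ell')^2$ and $\int u_\ell^2$ by the same factor, the Rayleigh quotient is unchanged,
\[
  \frac{|u_\ell|_{H^1(0,1)}^2}{\|u_\ell\|_{L^2(0,1)}^2}
  = \frac{|u_\ell^{\mathrm{ext}}|_{H^1_{\mathrm{per}}}^2}{\|u_\ell^{\mathrm{ext}}\|_{L^2_{\mathrm{per}}}^2},
\]
and since the image lies in the full space of periodic splines it suffices to bound the right-hand side over all such splines.

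Second, I would diagonalise the periodic problem. On a uniform periodic grid the mass and stiffness matrices in the cardinal B-spline basis are circulant, hence simultaneously diagonalised by the discrete Fourier modes $\theta_j$, so that the largest Rayleigh quotient equals $\max_j \widehat{K}(\theta_j)/\widehat{M}(\theta_j)$, which is bounded by the supremum of the continuous symbol ratio. By Poisson summation these symbols are the periodisations
\[
  \widehat{M}(\theta) = \sum_{k\in\mathbb{Z}} |\widehat{\varphi}(\theta+2\pi k)|^2,
  \qquad
  \widehat{K}(\theta) = \sum_{k\in\mathbb{Z}} (\theta+2\pi k)^2\, |\widehat{\varphi}(\theta+2\pi k)|^2,
\]
where $\widehat{\varphi}$ is the Fourier transform of the cardinal B-spline of degree $p$ on the unit grid, so that $|\widehat{\varphi}(\xi)|^2 = \bigl(\tfrac{\sin(\xi/2)}{\xi/2}\bigr)^{2(p+1)}$. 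Thus, on the unit grid, $\widehat{K}(\theta)/\widehat{M}(\theta)$ is a weighted average of the squared frequencies $(\theta+2\pi k)^2$ with weights $|\widehat{\varphi}(\theta+2\pi k)|^2$, and a scaling argument (the constant scales as $h_\ell^{-2}$) reduces the claim to showing that this average never exceeds $12$, the square of $2\sqrt{3}$.

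Finally, I would establish the uniform bound
\[
  \sup_{\theta\in[-\pi,\pi]} \frac{\widehat{K}(\theta)}{\widehat{M}(\theta)} \le 12
  \qquad \text{for all } p\in\mathbb{N}.
\]
The extremal configuration is $p=1$ at $\theta=\pi$, where an explicit evaluation using $\sum_{j\ge0}(2j+1)^{-2}=\pi^2/8$ and $\sum_{j\ge0}(2j+1)^{-4}=\pi^4/96$ yields exactly $12$; note that for every $\theta\in[-\pi,\pi]$ one has $\theta^2\le\pi^2<12$, so the dominant $k=0$ term already respects the bound and the task is to control the remaining tail. I expect this last step to be the main obstacle, because the weights $|\widehat{\varphi}|^2$ possess sidelobes and are not monotone in $|\xi|$, so the inequality cannot be read off term by term. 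The natural strategy is a concentration argument in $p$: raising the exponent $2(p+1)$ sharpens the weights onto the smallest available $|\theta+2\pi k|$, which should only decrease the weighted average of the squared frequencies and thereby single out $p=1$ as the worst case. Turning this heuristic into a rigorous, $p$-independent estimate — bounding the tail mass against the dominant low-frequency contribution uniformly in $p$ — is the crux of the argument.
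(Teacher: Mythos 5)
The paper does not prove this theorem at all: it is quoted from \cite{Takacs:Takacs:2015}, so the only thing to compare against is the reference (and the Appendix of this paper, whose proof of Theorem~\ref{thrm:approx} uses exactly your first step, namely the even reflection $w(x)=u(|x|)$ onto $(-1,1)$ followed by periodization). Your reduction is correct and is the right idea: the defining conditions of $\widetilde{S}_{p,\ell}$ are precisely what makes the even, $2$-periodic extension globally $C^{p-1}$, the extension lands in the full periodic spline space on the doubled grid, and the Rayleigh quotient is preserved. Step two (circulant mass and stiffness matrices, simultaneous diagonalization, Poisson summation giving the bracket products $\widehat M(\theta)=\sum_k|\widehat\varphi(\theta+2\pi k)|^2$ and $\widehat K(\theta)=\sum_k(\theta+2\pi k)^2|\widehat\varphi(\theta+2\pi k)|^2$) is also standard and correct.

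The genuine gap is the one you name yourself: the uniform bound $\sup_\theta \widehat K(\theta)/\widehat M(\theta)\le 12$ for all $p$ is asserted via a concentration heuristic but not proved, and without it the theorem is not established. The good news is that the gap closes more easily than you fear, because the ``sidelobes'' are an artifact of not cancelling the common factor: with $a_k:=|\theta+2\pi k|$ one has $|\widehat\varphi(\theta+2\pi k)|^2=\sin^{2(p+1)}(\theta/2)\,(a_k/2)^{-2(p+1)}$, so after cancellation
\begin{equation*}
  \frac{\widehat K(\theta)}{\widehat M(\theta)}
  \;=\;
  \frac{\sum_k a_k^{-2p}}{\sum_k a_k^{-2p-2}}\,=:R_p(\theta),
\end{equation*}
and the weights are now genuinely monotone in $a_k$. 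Cauchy--Schwarz in the form $\bigl(\sum_k a_k^{-p}\,a_k^{-p-2}\bigr)^2\le\bigl(\sum_k a_k^{-2p}\bigr)\bigl(\sum_k a_k^{-2p-4}\bigr)$ gives $R_{p+1}(\theta)\le R_p(\theta)$, so the worst case is $p=1$ as you conjectured. For $p=1$ the classical identities $\sum_k(\theta+2\pi k)^{-2}=\tfrac{1}{4}\sin^{-2}(\theta/2)$ and $\sum_k(\theta+2\pi k)^{-4}=\tfrac{1}{48}\bigl(3-2\sin^2(\theta/2)\bigr)\sin^{-4}(\theta/2)$ yield $R_1(\theta)=12\sin^2(\theta/2)/\bigl(3-2\sin^2(\theta/2)\bigr)$, which is increasing in $\sin^2(\theta/2)$ and equals $12$ exactly at $\theta=\pi$ --- recovering the constant $2\sqrt3$ after the scaling $h_\ell^{-2}$. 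With these two ingredients your outline becomes a complete and self-contained proof; note also that your $k=0$ remark ($\theta^2\le\pi^2<12$) is not needed once the monotonicity argument is in place.
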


This result makes it clear that the
counterexample~\eqref{eq:counterex} describes the effect of only a few outliers
(see also \cite{Cottrell:2006} on the topic of spectral outliers in IgA), connected to the boundary.
We make use of this fact by constructing a mass-Richardson smoother with a (low-rank) boundary correction.

Using $\|u_\ell\|_A^2 = |u_\ell|_{H^1(\Omega)}^2 + \|u_\ell\|_{L^2(\Omega)}^2$ and $h_\ell \le 1$,
the inverse inequality implies
\begin{equation}\label{eq:our:inverse}
    \|u_\ell\|_A \le c \|u_\ell\|_{h_{\ell}^{-2} M_\ell}
    \qquad \forall u_\ell \in \widetilde{S}_{p,\ell}(\Omega).
\end{equation}
Here and in the sequel, we use $c$ to refer to a generic constant
which is independent of both the grid level $\ell$ and the spline degree $p$.

In addition to the inverse estimate, also an approximation property
for the subspace $\widetilde{S}_{p,\ell}(\Omega)$ was proved in \cite{Takacs:Takacs:2015}.
We slightly refine the result here as follows.

\begin{theorem}\label{thm:ap}
		We have the approximation error estimate
		\begin{equation*}
				\|(I-\widetilde{T}_{\ell}) u \|_{L^2(\Omega)} \le 2 \sqrt{2} h_{\ell} \|u\|_{H^1(\Omega)}
				\qquad \forall u \in H^1(\Omega),
		\end{equation*}
    where
    $\widetilde{T}_{\ell}: H^1(\Omega) \to \widetilde{S}_{p,\ell}(\Omega)$
    is the $A$-orthogonal projector.
\end{theorem}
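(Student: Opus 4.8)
The plan is to prove the bound by an Aubin--Nitsche duality argument, which is the natural device for producing the extra power of $h_\ell$ that separates an $L^2$-estimate from the energy-norm best-approximation property of the $A$-orthogonal projector. Set $e := (I-\widetilde{T}_\ell)u$. By construction $e$ is $A$-orthogonal to the subspace, that is $(e,v_\ell)_A = 0$ for all $v_\ell \in \widetilde{S}_{p,\ell}(\Omega)$, and $\widetilde{T}_\ell$ is an $A$-contraction, so that $\|e\|_A \le \|u\|_A = \|u\|_{H^1(\Omega)}$. These two facts are all I will use about the projector itself.

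Next I introduce the dual problem: let $w \in H^1(\Omega)$ solve $(w,v)_A = (e,v)_{L^2(\Omega)}$ for all $v \in H^1(\Omega)$, which in strong form is the Neumann problem $-w'' + w = e$ on $(0,1)$ with $w'(0)=w'(1)=0$. Testing with $v=e$ gives $\|e\|_{L^2(\Omega)}^2 = (w,e)_A$, and since $e$ is $A$-orthogonal to $\widetilde{S}_{p,\ell}(\Omega)$ I may subtract an arbitrary $w_\ell \in \widetilde{S}_{p,\ell}(\Omega)$ to obtain $\|e\|_{L^2(\Omega)}^2 = (w-w_\ell,e)_A \le \|w-w_\ell\|_A\,\|e\|_A$. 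It then remains to bound $\inf_{w_\ell}\|w-w_\ell\|_A$. I would do this by combining the elliptic regularity estimate $\|w\|_{H^2(\Omega)} \le c\,\|e\|_{L^2(\Omega)}$ for the one-dimensional Neumann problem, which is elementary and can be made explicit on the interval, with a \emph{first-order} energy-norm approximation of the dual solution, $\inf_{w_\ell}\|w-w_\ell\|_A \le c\,h_\ell\|w\|_{H^2(\Omega)}$. Substituting both, the factor $\|e\|_{L^2(\Omega)}$ cancels and leaves $\|e\|_{L^2(\Omega)} \le c\,h_\ell\|e\|_A \le c\,h_\ell\|u\|_{H^1(\Omega)}$.

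The step I expect to be the main obstacle is this approximation of the dual solution within the \emph{constrained} space $\widetilde{S}_{p,\ell}(\Omega)$. The difficulty is that $\widetilde{S}_{p,\ell}(\Omega)$ forces all odd derivatives up to order $p-1$ to vanish at the endpoints, whereas $w$ inherits only the first-order condition $w'(0)=w'(1)=0$ from the Neumann data; differentiating $-w''+w=e$ shows that $w'''(0)=-e'(0)$ need not vanish, so $w$ does not lie in the boundary class of $\widetilde{S}_{p,\ell}(\Omega)$ for $p\ge 3$. This also explains why the shortcut of transferring an $L^2$-bound for the $L^2$-orthogonal projector, or of boosting it to $H^1$ through the inverse inequality of Theorem~\ref{thrm:inverse}, does not work: those would require a \emph{second-order} approximation of $w$, which the boundary constraints genuinely obstruct. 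The argument nonetheless closes because a first-order estimate only sees the zeroth- and first-order boundary behaviour, and there the single Neumann condition is exactly compatible with the first of the defining constraints of $\widetilde{S}_{p,\ell}(\Omega)$. This compatibility is the structural reason the method works: the Neumann eigenfunctions $\cos(k\pi x)$ satisfy \emph{all} the odd-derivative conditions of Definition~\ref{def:S-tilde}, and this is exactly the mechanism underlying the robust approximation theory of~\cite{Takacs:Takacs:2015}. I would therefore quote (or lightly adapt) the approximation result of that reference for the first-order estimate, and reserve the remaining work for tracking the sharp regularity and approximation constants, whose product yields the stated value $2\sqrt{2}$.
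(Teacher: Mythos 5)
Your duality argument is sound and, modulo the constant, proves the theorem, but it is a genuinely different route from the paper's. The paper does not use Aubin--Nitsche at all: it first shows (Theorem~\ref{thrm:approx}) that the approximant constructed in \cite{Takacs:Takacs:2015} by symmetrization to $(-1,1)$ and periodic projection is exactly the $H^1_\circ(\Omega)$-orthogonal projection $\widetilde{\Pi}_\ell$, so that $\|(I-\widetilde{\Pi}_\ell)u\|_{L^2}\le \sqrt{2}\,h_\ell|u|_{H^1}$ holds for that projector directly; it then compares $\widetilde{T}_\ell$ with $\widetilde{\Pi}_\ell$ by noting that the difference of the two bilinear forms, $(u,v)_B=(u,v)_{L^2}-|\Omega|^{-1}\bigl(\int u\bigr)\bigl(\int v\bigr)$, is positive semidefinite and dominated by the $L^2$ inner product, which yields $\|(\widetilde{T}_\ell-\widetilde{\Pi}_\ell)u\|_{L^2}\le\|(I-\widetilde{\Pi}_\ell)u\|_{L^2}$ and hence the factor $2$ in $2\sqrt{2}$ by the triangle inequality. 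This perturbation argument needs no regularity theory and no $H^2$ approximation result, only the first-order $L^2$ estimate. Your approach instead requires two extra ingredients: elliptic regularity for the one-dimensional Neumann problem (elementary, as you say) and a $p$-robust first-order energy-norm approximation of $H^2$ functions with Neumann data in the constrained space $\widetilde{S}_{p,\ell}(\Omega)$. You correctly identify the latter as the crux and correctly observe that the compatibility needed is only $w'(0)=w'(1)=0$, which matches the lowest-order constraint of Definition~\ref{def:S-tilde} and is exactly what the symmetrization mechanism of \cite{Takacs:Takacs:2015} delivers for $q=2$; such a result is indeed available there, so your argument closes. What your route buys is generality (it is the standard template and would survive changes to the subspace as long as the dual solution can be approximated to first order); what it costs is the explicit constant: tracking your chain gives roughly $\|e\|_{L^2}\le 2\sqrt{2}\,h_\ell|e|_{H^1}+O(h_\ell^2)\|e\|_{L^2}$, so you recover $2\sqrt{2}$ only asymptotically, not for all $h_\ell\le 1$ as the paper's argument does. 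Since the theorem is only ever invoked later with a generic constant $c$, this is a cosmetic rather than a substantive defect, but you should not claim the sharp value $2\sqrt{2}$ without reworking that last step.
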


The proof for this theorem is given in the Appendix.

Using \eqref{eq:our:inverse} and Theorem~\ref{thm:ap}, we immediately obtain
assumptions \eqref{eq:sp:2} and \eqref{eq:ap:2} of Theorem~\ref{thrm:two:grid:gen1}
with the choice $V_\ell=\widetilde{S}_{p,\ell}(\Omega)$ and $L_\ell = h_\ell^{-2} M_\ell$.
Thus, a two-grid method using the mass smoother would be robust in this subspace.
However, the problems we are interested in are typically discretized
in the full spline space $S_{p,\ell}(\Omega)$.
We extend the choice of $L_\ell$ robustly to such problems in the following.

\subsection{Extension of the inverse inequality to the entire spline space}

In this section, we modify the mass matrix in such a way as to satisfy
a robust inverse inequality in the entire spline space.
This is done by means of an additional term which is
essentially a discrete harmonic extension.
We set
\[
    \widetilde L_\ell := h_\ell^{-2} M_\ell +  (I-\widetilde T_{\ell})^T A (I-\widetilde T_{\ell}).
\]
Here, the second term corrects for the violation of the inverse inequality in the $A$-orthogonal complement
of $\widetilde S_{p,\ell}(\Omega)$ by incorporating the original operator $A$.
This term satisfies the energy minimization property
\[
	\|  u_\ell \|_{(I - \widetilde T_\ell)^T A (I - \widetilde T_\ell) }
    = \| (I - \widetilde T_\ell) u_\ell \|_A
    = \inf_{v_\ell \in \widetilde S_{p,\ell}(\Omega)} \| u_\ell + v_\ell \|_A.
\]

We now prove the inverse inequality for the modified norm $\|\cdot\|_{\widetilde L_\ell}$.
The proof requires both the inverse inequality and the approximation error estimate
in $\widetilde S_{p,\ell}(\Omega)$.

\begin{lemma}
    \label{lem:ssm-SP}
    We have the inverse inequality
    \[
        \| u_\ell \|_{A} \le c \| u_\ell \|_{\widetilde L_\ell}
        \qquad \forall u_\ell \in S_{p,\ell}(\Omega)
    \]
    with a constant $c$ which is independent of $\ell$ and $p$.
\end{lemma}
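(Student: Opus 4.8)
The plan is to split $u_\ell$ by the $A$-orthogonal projector $\widetilde T_\ell$ and to treat the two resulting pieces with the two robust estimates available on $\widetilde S_{p,\ell}(\Omega)$: the inverse inequality \eqref{eq:our:inverse} and the approximation error estimate of Theorem~\ref{thm:ap}. Writing $u_\ell = \widetilde T_\ell u_\ell + (I-\widetilde T_\ell) u_\ell$ and using that the two summands are $A$-orthogonal, I obtain
\[
    \|u_\ell\|_A^2 = \|\widetilde T_\ell u_\ell\|_A^2 + \|(I-\widetilde T_\ell) u_\ell\|_A^2 .
\]
The second term is precisely the correction built into $\widetilde L_\ell$: by the energy minimization property it equals $\|u_\ell\|_{(I-\widetilde T_\ell)^T A (I-\widetilde T_\ell)}^2$, and since $h_\ell^{-2} M_\ell$ is positive definite this is at most $\|u_\ell\|_{\widetilde L_\ell}^2$. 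Hence this term requires no further work.

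For the first term I use that $\widetilde T_\ell u_\ell \in \widetilde S_{p,\ell}(\Omega)$, so the robust inverse inequality \eqref{eq:our:inverse} applies and yields $\|\widetilde T_\ell u_\ell\|_A \le c\, h_\ell^{-1}\|\widetilde T_\ell u_\ell\|_{L^2(\Omega)}$. It then remains to bound this scaled $L^2$-norm of the projection by $\|u_\ell\|_{\widetilde L_\ell}$. By the triangle inequality,
\[
    h_\ell^{-1}\|\widetilde T_\ell u_\ell\|_{L^2(\Omega)}
    \le h_\ell^{-1}\|u_\ell\|_{L^2(\Omega)} + h_\ell^{-1}\|(I-\widetilde T_\ell) u_\ell\|_{L^2(\Omega)},
\]
and the first summand is simply $\|u_\ell\|_{h_\ell^{-2} M_\ell} \le \|u_\ell\|_{\widetilde L_\ell}$.

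The key step—and the only nonroutine one—is controlling the second summand, the scaled $L^2$-norm of the complementary part $z_\ell := (I-\widetilde T_\ell) u_\ell$. Applying Theorem~\ref{thm:ap} directly to $u_\ell$ would only bound this by $\|u_\ell\|_A$, which is the very quantity being estimated and whose coefficient turns out too large to absorb. The trick I would use instead is to apply Theorem~\ref{thm:ap} to $z_\ell$ itself and exploit that $I-\widetilde T_\ell$ is idempotent, so that $(I-\widetilde T_\ell) z_\ell = z_\ell$; this gives $\|z_\ell\|_{L^2(\Omega)} = \|(I-\widetilde T_\ell) z_\ell\|_{L^2(\Omega)} \le 2\sqrt 2\, h_\ell\, \|z_\ell\|_A$. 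Since $\|z_\ell\|_A = \|(I-\widetilde T_\ell) u_\ell\|_A \le \|u_\ell\|_{\widetilde L_\ell}$ as already observed, I conclude $h_\ell^{-1}\|z_\ell\|_{L^2(\Omega)} \le 2\sqrt 2\,\|u_\ell\|_{\widetilde L_\ell}$.

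Collecting these estimates yields $\|\widetilde T_\ell u_\ell\|_A \le c\,(1 + 2\sqrt 2)\,\|u_\ell\|_{\widetilde L_\ell}$, and substituting this together with the bound on $\|(I-\widetilde T_\ell) u_\ell\|_A$ into the orthogonal splitting produces $\|u_\ell\|_A \le c'\,\|u_\ell\|_{\widetilde L_\ell}$ with $c' = \bigl(c^2(1+2\sqrt 2)^2 + 1\bigr)^{1/2}$. The argument is robust because every constant that enters—from the inverse inequality \eqref{eq:our:inverse} and from Theorem~\ref{thm:ap}—is independent of both $\ell$ and $p$. I expect the idempotency trick in the third paragraph to be the conceptual crux, since it is what converts the one-directional approximation estimate into a usable inverse-type bound on the complementary subspace without reintroducing the nonrobust inverse inequality on all of $S_{p,\ell}(\Omega)$.
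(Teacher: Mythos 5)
Your proposal is correct and follows essentially the same route as the paper's proof: split off $\widetilde T_\ell u_\ell$, apply the robust inverse inequality \eqref{eq:our:inverse} there, use the triangle inequality, and control $h_\ell^{-1}\|(I-\widetilde T_\ell)u_\ell\|_{L^2(\Omega)}$ via Theorem~\ref{thm:ap} combined with idempotency of $I-\widetilde T_\ell$ --- exactly the paper's step $\|(I-\widetilde T_\ell)u_\ell\|_{M_\ell}=\|(I-\widetilde T_\ell)^2 u_\ell\|_{M_\ell}\le c\,h_\ell\|(I-\widetilde T_\ell)u_\ell\|_{A}$. The only cosmetic difference is that you open with the Pythagorean identity for the $A$-orthogonal splitting where the paper uses the triangle inequality, which changes nothing of substance.
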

\begin{proof}
    Let $u_\ell \in S_{p,\ell}(\Omega)$.
    Using the triangle inequality, the inverse inequality \eqref{eq:our:inverse} and again
    the triangle inequality  we obtain
    \begin{align*}
        \| u_\ell \|_A
        &\le \| \widetilde T_\ell u_\ell \|_A + \| (I-\widetilde T_\ell) u_\ell  \|_A 
         \le c h_\ell^{-1} \| \widetilde T_\ell u_\ell \|_{M_\ell} + \| (I-\widetilde T_\ell) u_\ell \|_A \\
        &\le c h_\ell^{-1} \| u_\ell \|_{M_\ell} + c h_\ell^{-1} \| (I-\widetilde T_\ell) u_\ell \|_{M_\ell} 
        + \| (I-\widetilde T_\ell) u_\ell \|_A.
    \end{align*}
    By Theorem~\ref{thm:ap}, we have
    $
        \| (I - \widetilde T_\ell) u_\ell  \|_{M_\ell} =
        \| (I - \widetilde T_\ell)^2 u_\ell  \|_{M_\ell} \le
        c h_\ell \| (I-\widetilde T_\ell) u_\ell \|_{A}
    $,
    and it follows
    \[
        \| u_\ell \|_A
        \le c \left( h_\ell^{-1} \| u_\ell \|_{M_\ell} + \| (I-\widetilde T_\ell) u_\ell \|_{A} \right).
    \]
    The statement follows since
    $\| (I-\widetilde T_\ell) u_\ell \|_A =
    \| u_\ell\|_{(I - \widetilde T_\ell)^T A_\ell (I - \widetilde T_\ell)}$.
    \qed
\end{proof}


\subsection{A robust multigrid method for the whole spline space}

We first show that Theorem~\ref{thm:ap} can be easily extended to the projection into
the entire spline space.

\begin{lemma}\label{lem:approx2}
    The approximation error estimate
    \[
        \|(I-T_{\ell}) u\|_{L^2(\Omega)}  \le c h_\ell \|u\|_{H^1(\Omega)}
        \qquad\forall u \in H^1(\Omega),
    \]
    holds, where $T_{\ell}: H^1(\Omega) \to S_{p,\ell}(\Omega)$ is the $A$-orthogonal projector.
\end{lemma}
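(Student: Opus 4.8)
The plan is to leverage the already-established approximation property for the smaller space $\widetilde{S}_{p,\ell}(\Omega)$ from Theorem~\ref{thm:ap} and transfer it to the full space $S_{p,\ell}(\Omega)$. The key observation is that since $\widetilde{S}_{p,\ell}(\Omega) \subset S_{p,\ell}(\Omega)$, the $A$-orthogonal projector $T_\ell$ onto the larger space yields a \emph{better} (or at least no worse) approximation in the energy norm than the projector $\widetilde{T}_\ell$ onto the smaller space. Concretely, for any $u \in H^1(\Omega)$, the projection $T_\ell u$ is by definition the best $A$-approximation from $S_{p,\ell}(\Omega)$, so $\|(I-T_\ell)u\|_A \le \|(I-\widetilde{T}_\ell)u\|_A$.

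The natural route is therefore to first establish an energy-norm approximation estimate and then convert it to an $L^2$ estimate. First I would note that Theorem~\ref{thm:ap} as stated gives an $L^2$ bound $\|(I-\widetilde{T}_\ell)u\|_{L^2} \le 2\sqrt{2}\, h_\ell \|u\|_{H^1}$; to compare projectors I need control in the $A$-norm. By Galerkin orthogonality and the nesting $\widetilde{S}_{p,\ell}\subset S_{p,\ell}$, we have the Pythagorean-type relations $\|(I-T_\ell)u\|_A \le \|(I-\widetilde T_\ell)u\|_A$ since $\widetilde T_\ell u$ is a competitor in the larger space. So it suffices to bound $\|(I-\widetilde T_\ell)u\|_A$, i.e.\ an energy-norm version of Theorem~\ref{thm:ap}. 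I would then combine this with a standard duality (Aubin--Nitsche) argument to regain the optimal power of $h_\ell$ in the $L^2$ norm for $T_\ell$.

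The cleanest self-contained argument, however, avoids redoing duality: I would bound $\|(I-T_\ell)u\|_{L^2}$ directly. Write $(I-T_\ell)u = (I-\widetilde T_\ell)u - (T_\ell - \widetilde T_\ell)u$. Since $(T_\ell-\widetilde T_\ell)u = T_\ell(I-\widetilde T_\ell)u$ lies in $S_{p,\ell}$, and $T_\ell$ is the $A$-orthogonal projector, one controls its $L^2$-norm via the inverse inequality on the full space once combined with its energy bound. The subtle point is that the inverse inequality (Theorem~\ref{thrm:inverse}) holds robustly only on $\widetilde{S}_{p,\ell}$, not on all of $S_{p,\ell}$, so I must be careful not to invoke it for arbitrary elements of the larger space. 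The remedy is to work entirely in the energy norm: the quantity $\|(I-T_\ell)u\|_A \le \|(I-\widetilde T_\ell)u\|_A$ is robust, and then a single application of the $L^2$-approximation to $(I-\widetilde T_\ell)u$ handles the rest.

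The hard part will be carefully tracking which norms and which projectors the robust constants attach to, since the inverse inequality is available only on $\widetilde{S}_{p,\ell}(\Omega)$ whereas the target estimate is posed on the full space. Provided the energy-norm form of Theorem~\ref{thm:ap} is in hand (either as proved in the Appendix or derived from the stated $L^2$ version via the trivial bound $\|(I-\widetilde T_\ell)u\|_A \le \|u\|_A = \|u\|_{H^1}$ together with an interpolation/duality step), the extension to $T_\ell$ follows purely from the best-approximation property of the $A$-orthogonal projector onto the larger nested space, giving the claimed estimate with a constant $c$ independent of $\ell$ and $p$.
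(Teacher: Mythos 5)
Your opening decomposition is exactly the paper's: write $I-T_\ell=(I-\widetilde T_\ell)-(T_\ell-\widetilde T_\ell)$ and use the nesting $\widetilde S_{p,\ell}(\Omega)\subset S_{p,\ell}(\Omega)$, which gives $\widetilde T_\ell=\widetilde T_\ell T_\ell$ and hence $T_\ell-\widetilde T_\ell=T_\ell(I-\widetilde T_\ell)=(I-\widetilde T_\ell)T_\ell$. But you never actually bound the cross term $\|(T_\ell-\widetilde T_\ell)u\|_{L^2(\Omega)}$, and the remedies you offer do not produce that bound. Applying Theorem~\ref{thm:ap} ``to $(I-\widetilde T_\ell)u$'' only reproduces an estimate for $\|(I-\widetilde T_\ell)u\|_{L^2(\Omega)}$, i.e.\ the first term of the triangle inequality, which you already have. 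The best-approximation inequality $\|(I-T_\ell)u\|_A\le\|(I-\widetilde T_\ell)u\|_A$ is true but useless here: for general $u\in H^1(\Omega)$ the right-hand side is only $O(\|u\|_{H^1(\Omega)})$ with no power of $h_\ell$, since Theorem~\ref{thm:ap} is an $L^2$ estimate, not an energy-norm estimate. Converting an energy bound into an optimal $L^2$ bound for $T_\ell$ would indeed require an Aubin--Nitsche argument, but that in turn needs a $p$-robust $H^1$-error estimate of order $h_\ell$ for $H^2$-regular dual solutions, which is nowhere established in the paper; so the duality route you sketch is not available from the stated results, and you explicitly decline to carry it out anyway.

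The missing step is simple and is the whole content of the paper's proof: read the cross term as $(I-\widetilde T_\ell)T_\ell u$ rather than $T_\ell(I-\widetilde T_\ell)u$, apply Theorem~\ref{thm:ap} to the \emph{function} $T_\ell u\in H^1(\Omega)$ to get $\|(I-\widetilde T_\ell)T_\ell u\|_{L^2(\Omega)}\le 2\sqrt2\,h_\ell\|T_\ell u\|_{H^1(\Omega)}$, and then use the $A$-stability $\|T_\ell u\|_{H^1(\Omega)}=\|T_\ell u\|_A\le\|u\|_A=\|u\|_{H^1(\Omega)}$. No inverse inequality, no $L^2$-stability of $T_\ell$, and no duality are needed. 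As written, your argument has a genuine gap at precisely the term that required an idea.
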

\begin{proof}
    As $\widetilde{S}_{p,\ell}(\Omega) \subset S_{p,\ell}(\Omega)$,
    we have $\widetilde{T}_{\ell} = \widetilde{T}_{\ell} T_{\ell}$.
    This identity, the triangle inequality, 
    Theorem~\ref{thm:ap}, and the stability of the $A$-orthogonal projector $T_\ell$ yield
    \begin{align*}
        &\|(I-T_{\ell}) u\|_{L^2(\Omega)}^2
        \le 2( \|(I-\widetilde{T}_{\ell}) u\|_{L^2(\Omega)}^2 + \|(I-\widetilde{T}_{\ell})T_{\ell} u\|_{L^2(\Omega)}^2)\\
        &\qquad\quad\quad\le  2 c h_\ell  (  \| u \|_{H^1(\Omega)}^2 + \| T_{\ell} u\|_{H^1(\Omega)}^2 ) 
        \le 4 c h_\ell \| u \|_{H^1(\Omega)}^2\quad \forall u \in H^1(\Omega).
        \qedhere
    \end{align*}
\end{proof}

The robust inverse inequality (Lemma~\ref{lem:ssm-SP}) together with the approximation
property (Lemma~\ref{lem:approx2})
allow us to prove robust convergence of the following multigrid method for the
space $V_\ell:= S_{p,\ell}(\Omega)$.
It turns out that we also obtain robust convergence if we do not
project into $\widetilde{S}_{p,\ell}(\Omega)$, but into a subspace
$S^I_{p,\ell}(\Omega)$, which may be easier to handle in practice.

\begin{theorem}
    \label{thm:inner-smoother}
    Let $S^I_{p,\ell}(\Omega) \subseteq \widetilde{S}_{p,\ell}(\Omega)$ a subspace and
    $T^I_\ell: S_{p,\ell}(\Omega) \to S^I_{p,\ell}(\Omega)$ the $A$-orthogonal projector.
    Consider the smoother~\eqref{eq:sm} with the choice
    \begin{equation}\label{eq:inner-smoother}
        L_\ell := h_{\ell}^{-2} M_\ell + (I-T^I_\ell)^T A_\ell (I-T^I_\ell).
    \end{equation}
    Then there exists a damping parameter $\tau>0$ and a choice of $\nu_0>0$, both independent of $\ell$ and
    $p$, such that the two-grid method with $\nu>\nu_0$
    smoothing steps converges with rate $q=\nu_0/\nu$.

    With the same choice of $\tau$ and with $\nu > 4 \nu_0$, also the W-cycle multigrid
    method with $\nu/2$ pre- and $\nu/2$ post-smoothing steps converges in the energy norm with
    convergence rate $q=2\nu_0/\nu$.
\end{theorem}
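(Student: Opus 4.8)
The plan is to verify the two abstract hypotheses of Theorems~\ref{thrm:two:grid:gen1} and~\ref{thrm:two:grid:gen1a}, namely the inverse inequality~\eqref{eq:sp:2} and the approximation property~\eqref{eq:ap:2}, for the choice $V_\ell = S_{p,\ell}(\Omega)$ and $L_\ell$ as in~\eqref{eq:inner-smoother}, with constants $C_I$ and $C_A$ that are independent of $\ell$ and $p$; the two convergence statements then follow verbatim from those theorems with $\tau := C_I^{-1}$ and $\nu_0 := C_I C_A$. Note first that $L_\ell$ is symmetric and positive definite, since $h_\ell^{-2} M_\ell$ is and $(I-T^I_\ell)^T A_\ell (I-T^I_\ell)$ is symmetric positive semidefinite.

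For the inverse inequality~\eqref{eq:sp:2}, the idea is to compare $L_\ell$ with the matrix $\widetilde L_\ell = h_\ell^{-2} M_\ell + (I-\widetilde T_\ell)^T A (I-\widetilde T_\ell)$ from Lemma~\ref{lem:ssm-SP}, rather than to repeat the proof of that lemma with $T^I_\ell$ in place of $\widetilde T_\ell$. The key observation is that, since $S^I_{p,\ell}(\Omega) \subseteq \widetilde S_{p,\ell}(\Omega)$ and $\widetilde T_\ell u_\ell$ is the $A$-best approximation of $u_\ell$ in $\widetilde S_{p,\ell}(\Omega)$, the element $T^I_\ell u_\ell \in S^I_{p,\ell}(\Omega)$ is an admissible competitor, so that
\[
    \|(I-\widetilde T_\ell) u_\ell\|_A \le \|(I-T^I_\ell) u_\ell\|_A
    \qquad \forall u_\ell \in S_{p,\ell}(\Omega).
\]
As the mass contributions in $\widetilde L_\ell$ and $L_\ell$ coincide, this yields $\|u_\ell\|_{\widetilde L_\ell} \le \|u_\ell\|_{L_\ell}$, and combining with Lemma~\ref{lem:ssm-SP} gives $\|u_\ell\|_A \le c\,\|u_\ell\|_{\widetilde L_\ell} \le c\,\|u_\ell\|_{L_\ell}$, i.e.\ \eqref{eq:sp:2} with a robust $C_I$.

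For the approximation property~\eqref{eq:ap:2}, I would split the $L_\ell$-norm of $(I-T_{\ell-1}) u_\ell$ into its two defining contributions,
\[
    \|(I-T_{\ell-1}) u_\ell\|_{L_\ell}^2
    = h_\ell^{-2} \|(I-T_{\ell-1}) u_\ell\|_{M_\ell}^2
    + \|(I-T^I_\ell)(I-T_{\ell-1}) u_\ell\|_A^2,
\]
and bound each separately. The mass term is controlled by Lemma~\ref{lem:approx2} applied on level $\ell-1$, using $h_{\ell-1} = 2 h_\ell$ and $\|(I-T_{\ell-1}) u_\ell\|_{M_\ell} = \|(I-T_{\ell-1}) u_\ell\|_{L^2(\Omega)}$, which gives $h_\ell^{-2} \|(I-T_{\ell-1}) u_\ell\|_{M_\ell}^2 \le c\,\|u_\ell\|_A^2$. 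The energy term is handled purely by non-expansiveness of $A$-orthogonal projectors: both $I-T^I_\ell$ and $I-T_{\ell-1}$ are $A$-orthogonal projectors on $S_{p,\ell}(\Omega)$, hence $\|(I-T^I_\ell)(I-T_{\ell-1}) u_\ell\|_A \le \|u_\ell\|_A$. Adding the two bounds establishes~\eqref{eq:ap:2} with a robust $C_A$.

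The main obstacle is the inverse inequality. A direct imitation of the proof of Lemma~\ref{lem:ssm-SP} with $T^I_\ell$ would require an estimate of the form $\|(I-T^I_\ell) u_\ell\|_{M_\ell} \le c\,h_\ell\,\|(I-T^I_\ell) u_\ell\|_A$, which is false in general: e.g.\ for $S^I_{p,\ell}(\Omega) = \{0\}$ it would read $\|u_\ell\|_{L^2(\Omega)} \le c\,h_\ell\,\|u_\ell\|_{H^1(\Omega)}$, which is violated by constant functions. The comparison with $\widetilde L_\ell$ circumvents this precisely because enlarging the space onto which one projects can only decrease the energy defect, which is the one direction of monotonicity available here; everything else is routine bookkeeping with the already-established lemmas.
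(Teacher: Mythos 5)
Your proposal is correct and follows essentially the same route as the paper's own proof: both verify \eqref{eq:sp:2} by invoking Lemma~\ref{lem:ssm-SP} and then using the energy-minimization inequality $\|(I-\widetilde T_\ell)u_\ell\|_A \le \|(I-T^I_\ell)u_\ell\|_A$ coming from $S^I_{p,\ell}(\Omega)\subseteq\widetilde S_{p,\ell}(\Omega)$, and both verify \eqref{eq:ap:2} by splitting the $L_\ell$-norm into the mass part (controlled by Lemma~\ref{lem:approx2}) and the energy part (controlled by non-expansiveness of the $A$-orthogonal projectors). Your closing observation about why a direct imitation of Lemma~\ref{lem:ssm-SP} with $T^I_\ell$ would fail matches the remark the paper makes immediately after the theorem.
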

\begin{proof}
    We show the assumptions of Theorem~\ref{thrm:two:grid:gen1}.
    The statements then follow from Theorems~\ref{thrm:two:grid:gen1} and~\ref{thrm:two:grid:gen1a}.

    \textbf{Proof of assumption~\eqref{eq:sp:2}.}
    Let $ u_\ell \in S_{p,\ell}(\Omega)$.  Lemma~\ref{lem:ssm-SP} states
    \[
        \|u_\ell\|_{A}^2
        \le
        c (\|u_\ell\|_{h_{\ell}^{-2} M_\ell}^2 + \| (I-\widetilde{T}_\ell)u_\ell\|_{A}^2).
    \]
    Using the energy-minimizing property of the projection operators, we obtain
    \begin{align*}
        \| (I-\widetilde{T}_\ell)u_\ell\|_{A} & =
        \inf_{v_\ell\in \widetilde{S}_{p,\ell}(\Omega) } \| u_\ell-v_\ell\|_{A} \\
        & \le \inf_{v_\ell\in S^I_{p,\ell}(\Omega) } \| u_\ell-v_\ell\|_{A} = \| (I-T^I_\ell)u_\ell\|_{A}
    \end{align*}
    since $S^I_{p,\ell}(\Omega) \subseteq \widetilde{S}_{p,\ell}(\Omega)$.
    Combining these estimates, we get the desired statement
    \begin{equation}\label{eq:sp:6}
        \|u_\ell\|_{A}^2 \le c \| u_\ell\|_{L_\ell}^2.
    \end{equation}

    \textbf{Proof of assumption~\eqref{eq:ap:2}.}
    Let $ u_\ell \in S_{p,\ell}(\Omega)$. Lemma~\ref{lem:approx2} implies
    \[
        \|(I-T_{\ell-1}) u_\ell\|_{ h_\ell^{-2} M_\ell }^2 \le c \|u_\ell\|_{ A }^2
    \]
    Since both $I - T^I_{\ell}$ and $I - T_{\ell-1}$ are $A$-orthogonal projectors
    and thus are stable in the $A$-norm, we obtain
    the approximation error estimate~\eqref{eq:ap:2} via
    \begin{align*}
        \| (I - T_{\ell-1}) u_\ell \|_{L_\ell}^2
        &= \| (I - T_{\ell-1}) u_\ell \|_{ h_\ell^{-2} M_\ell }^2
          +	\| (I - T^I_{\ell}) (I - T_{\ell-1}) u_\ell \|_{A}^2 \\
       & \le (c + 1) \|  u_\ell \|_{A}^2 .
        \qedhere
    \end{align*}
\end{proof}

The choice $S^I_{p,\ell}(\Omega) = \widetilde{S}_{p,\ell}(\Omega)$ in the above theorem
is admissible and results in a robust multigrid method.
However, the space $\widetilde{S}_{p,\ell}(\Omega)$ is somewhat difficult to work with
in practice since it is not easy to represent in terms of the standard B-spline basis.
Therefore, we choose a slightly smaller space for $S^I_{p,\ell}(\Omega)$ which is based
on a simple splitting of the degrees of freedom.
To this end, we split the spline space $S_{p,\ell}(\Omega)$ into ``boundary'' and ``inner''
functions,
\[
			S_{p,\ell}(\Omega) = S_{p,\ell}^{\Gamma}(\Omega) + S_{p,\ell}^{I}(\Omega),
\]
where $S_{p,\ell}^{\Gamma}(\Omega)$ is spanned by the first $p$ B-spline basis functions
$\varphi_{p,\ell}^{(1)},\ldots, \varphi_{p,\ell}^{(p)}$
and the last $p$ basis functions
$\varphi_{p,\ell}^{(m_\ell-p+1)},\ldots, \varphi_{p,\ell}^{(m_\ell)}$, whereas
the space $S_{p,\ell}^{I}(\Omega)$ is spanned by all the remaining basis functions.
By construction, $S_{p,\ell}^{I}(\Omega)$ consists of all splines that vanish on the boundary
together with all derivatives up to order $p-1$.
Therefore, we have
$S_{p,\ell}^{I}(\Omega) \subseteq \widetilde{S}_{p,\ell}(\Omega)$.

Due to this subspace relation, the inverse inequality \eqref{eq:our:inverse}
remains valid in $S_{p,\ell}^{I}(\Omega)$.
However, the analogue of the approximation property Theorem~\ref{thm:ap} does not
in general hold in this smaller space.
The existence of the larger subspace $\widetilde{S}_{p,\ell}(\Omega)$ in which both
properties hold is crucial for the proof of assumption \eqref{eq:sp:2} above,
even though we do not directly use the space $\widetilde{S}_{p,\ell}(\Omega)$ in practice.

Concerning the practical realization of such a smoother, observe that we can
reorder the degrees of freedom based on the splitting
$S_{p,\ell}(\Omega) =S_{p,\ell}^{\Gamma}(\Omega) +S_{p,\ell}^{I}(\Omega) $
and write the matrix $A_\ell$ and the vector $\ul{u}_\ell$ in block structure as
\begin{equation}\label{eq:k:blocks}
			A_\ell = \begin{pmatrix}
                A_{\Gamma\Gamma,\ell} & A_{I\Gamma,\ell}^T \\
                A_{I \Gamma,\ell}     & A_{II,\ell}
            \end{pmatrix},
            \qquad
			\ul{u}_\ell = \begin{pmatrix}
                \ul{u}_{\Gamma,\ell} \\ \ul{u}_{I,\ell}
            \end{pmatrix}.
\end{equation}
Using a matrix representation of the projector $T^I_\ell$,
direct computation yields
\[
    (I - T^I_\ell) \ul u_\ell =
    \begin{pmatrix}
        \ul u_{\Gamma,\ell} \\ -A_{II,\ell}^{-1} A_{I \Gamma,\ell} \ul u_{\Gamma,\ell}
    \end{pmatrix},
\]
and therefore the matrix $L_\ell$ can be represented using the Schur complement as
\begin{equation}\label{eq:def:khat}
    L_\ell =
    h_\ell^{-2} M_\ell + C_\ell :=
    h_\ell^{-2} M_\ell +
        \begin{pmatrix}
            A_{\Gamma\Gamma,\ell} - A_{I\Gamma,\ell}^T A_{II,\ell}^{-1} A_{I\Gamma,\ell} & 0 \\
            0 & 0
        \end{pmatrix}.
\end{equation}
We remark that $\ul u_\ell^T C_\ell \ul u_\ell$ represents the (squared) energy of the
discrete harmonic extension of $\ul u_{\Gamma,\ell}$ from $S^\Gamma_{p,\ell}(\Omega)$ to $S_{p,\ell}(\Omega)$.


\section{A robust multigrid method for two-dimensional domains}\label{sec:2d}

In this section, we extend the theory presented in the previous section to the (more relevant) case of
two-dimensional domains. As outlined in Remark~\ref{rem:1}, we restrict ourselves to problems without
geometry mapping, i.e., problems on the unit square only. However, these approaches can be used as
preconditioners for problems on general geometries if there is a regular geometry mapping.

In the following, we are interested in solving the problem \eqref{eq:linear:system} with $d=2$. Here,
we assume to have a tensor product space. Without loss of generality, we assume for sake of simplicity
that $\Omega=(0,1)^2$ and
\begin{equation*}
		\mathcal{S}_{p,\ell}(\Omega) := S_{p,\ell}(0,1) \otimes S_{p,\ell}(0,1).
\end{equation*}

Below, we will use calligraphic letters to refer to matrices for the two-dimensional domain,
whereas standard letters refer to matrices for the one-dimensional domain.
Using the tensor product structure of the problem and the discretization,
the mass matrix has the tensor product structure
\begin{equation*}
		\mathcal{M}_\ell = M_\ell \otimes M_\ell,
\end{equation*}
where $\otimes$ denotes the Kronecker product.
To keep the notation simple, we assume that we have the same basis and thus
the same matrix $M_\ell$ for both directions of the two-dimensional domain.
However, this is not needed for the analysis.

The stiffness matrix in two dimensions has the representation
\begin{equation}\label{eq:decomp:k:twod}
		\mathcal{A}_\ell = K_\ell \otimes M_\ell + M_\ell \otimes K_\ell + M_\ell \otimes M_\ell,
\end{equation}
where $K_\ell$ represents the scalar product $(\nabla u_\ell,\nabla v_\ell)_{L^2(\Omega)}$ in one
dimension. This decomposition reflects that \[( u_\ell,v_\ell)_A = (\partial_x u_\ell,\partial_x v_\ell)_{L^2(\Omega)}
+(\partial_y u_\ell,\partial_y v_\ell)_{L^2(\Omega)}+( u_\ell, v_\ell)_{L^2(\Omega)}.\]
The problem of interest is now
\[
    \mathcal{A}_\ell \ul u_\ell = \ul{f}_\ell.
\]
Here, the multigrid framework from Section~\ref{sec:mg} applies unchanged, only
replacing standard letters by calligraphic letters.
Again, we have to choose a suitable smoother by determining the matrix $\mathcal{L}_\ell$.
To this end, we again have a look at the approximation error estimate
and the inverse inequality.

Let $\widetilde{\mathcal{S}}_{p,\ell}(\Omega) := \widetilde{S}_{p,\ell}(0,1) \otimes \widetilde{S}_{p,\ell}(0,1)$
and let $\|\cdot\|_{\mathcal{A}}$ be the energy norm for the problem in two
dimensions, i.e., such that $\| u_\ell \|_{\mathcal{A}}=\|\underline  u_\ell\|_{\mathcal{A}_\ell}$ holds for
all $u_\ell \in \mathcal{S}_{p,\ell}(\Omega) $.
An approximation error estimate for the space $\widetilde{\mathcal{S}}_{p,\ell}(\Omega)$
was shown in \cite[Theorem~8]{Takacs:Takacs:2015}.
However, there, not the $\mathcal{A}$-orthogonal projector was analyzed. The following
statement is analogous to Theorem~\ref{thm:ap} and can be proved based on the
one-dimensional result.

\begin{theorem}\label{thrm:approx:2d}
    The approximation error estimate
	\begin{equation*}
		\| (I-\widetilde{\mathcal T}_{\ell}) u_\ell \|_{L^2(\Omega)} \le c h_\ell \|u_\ell\|_{\mathcal{A}}
			\qquad \forall u_\ell\in \mathcal{S}_{p,\ell}(\Omega),
	\end{equation*}
    where $\widetilde{\mathcal{T}}_\ell: \mathcal{S}_{p,\ell}(\Omega) \to \widetilde{\mathcal{S}}_{p,\ell}(\Omega)$
    is the $\mathcal{A}$-orthogonal projector,
	holds with a constant $c$ which is independent of $\ell$ and $p$.
\end{theorem}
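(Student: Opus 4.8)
The plan is to lift the one-dimensional estimate of Theorem~\ref{thm:ap} to the tensor-product setting by comparing $\widetilde{\mathcal T}_\ell$ with a second, more tractable projector onto the \emph{same} space $\widetilde{\mathcal S}_{p,\ell}(\Omega)$. First I would fix a projector $\widetilde{\mathcal Q}_\ell$ onto $\widetilde{\mathcal S}_{p,\ell}(\Omega)$ for which the robust estimate $\|(I-\widetilde{\mathcal Q}_\ell)u_\ell\|_{L^2(\Omega)}\le c\,h_\ell\|u_\ell\|_{\mathcal A}$ is already available. The natural choice is the tensor product $\widetilde{\mathcal Q}_\ell=\widetilde T_\ell\otimes\widetilde T_\ell$ of the one-dimensional $A$-orthogonal projector, for which this estimate is essentially \cite[Theorem~8]{Takacs:Takacs:2015}; if one prefers to re-derive it, it follows from Theorem~\ref{thm:ap} by writing $I-\widetilde T_\ell\otimes\widetilde T_\ell=(I-\widetilde T_\ell)\otimes I+\widetilde T_\ell\otimes(I-\widetilde T_\ell)$, applying the one-dimensional estimate in each coordinate direction via Fubini, and bounding the surviving factor by the ($p$-robust) $L^2$-stability of $\widetilde T_\ell$.

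The decisive algebraic step is that $\widetilde{\mathcal T}_\ell$ and $\widetilde{\mathcal Q}_\ell$ are both idempotent with range exactly $\widetilde{\mathcal S}_{p,\ell}(\Omega)$, so each acts as the identity on that space; hence $\widetilde{\mathcal T}_\ell\widetilde{\mathcal Q}_\ell=\widetilde{\mathcal Q}_\ell$ and a one-line computation gives $I-\widetilde{\mathcal T}_\ell=(I-\widetilde{\mathcal T}_\ell)(I-\widetilde{\mathcal Q}_\ell)$. Applying this to $u_\ell$ and taking $L^2$ norms yields $\|(I-\widetilde{\mathcal T}_\ell)u_\ell\|_{L^2(\Omega)}\le\|I-\widetilde{\mathcal T}_\ell\|_{L^2(\Omega)\to L^2(\Omega)}\,\|(I-\widetilde{\mathcal Q}_\ell)u_\ell\|_{L^2(\Omega)}$, and the last factor is already of the desired size. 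The whole theorem would therefore reduce to a single quantitative statement: the $\mathcal A$-orthogonal projector $\widetilde{\mathcal T}_\ell$ is $L^2$-stable uniformly in $\ell$ and $p$, that is, $\|\widetilde{\mathcal T}_\ell\|_{L^2(\Omega)\to L^2(\Omega)}\le c$.

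I expect this uniform $L^2$-stability to be the main obstacle, and the genuinely new ingredient to be an Aubin--Nitsche duality argument, which seems unavoidable: the cheap alternatives fail because the best-approximation property of $\widetilde{\mathcal T}_\ell$ controls only the \emph{energy} norm of the difference $(\widetilde{\mathcal Q}_\ell-\widetilde{\mathcal T}_\ell)u_\ell$, which is of size $\mathcal O(1)\|u_\ell\|_{\mathcal A}$ and thus far too coarse to recover the factor $h_\ell$ in $L^2$, while $\widetilde{\mathcal T}_\ell$ does \emph{not} factor as a tensor product (the form $(\cdot,\cdot)_{\mathcal A}$ corresponds to $K_\ell\otimes M_\ell+M_\ell\otimes K_\ell+M_\ell\otimes M_\ell$, not to a tensor product of one-dimensional forms), so the one-dimensional stability of $\widetilde T_\ell$ cannot be tensorized. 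Applied directly to $e=(I-\widetilde{\mathcal T}_\ell)u_\ell$, the duality closes the argument: the solution $\psi$ of the Neumann problem with right-hand side $e$ enjoys full $H^2$ regularity since $\Omega=(0,1)^2$ is convex, giving $\|\psi\|_{H^2(\Omega)}\le c\,\|e\|_{L^2(\Omega)}$; writing $\|e\|_{L^2(\Omega)}^2=(e,\psi)_{\mathcal A}=(e,\psi-\widetilde{\mathcal Q}_\ell\psi)_{\mathcal A}$ (using $\mathcal A$-orthogonality of $e$ to $\widetilde{\mathcal S}_{p,\ell}(\Omega)$), bounding by Cauchy--Schwarz, and invoking a $p$-robust, order-$h_\ell$ \emph{energy}-norm approximation estimate for $\widetilde{\mathcal S}_{p,\ell}(\Omega)$ absorbs one power of $h_\ell$ and, together with $\|e\|_{\mathcal A}\le\|u_\ell\|_{\mathcal A}$, yields the claim; this also delivers the $L^2$-stability above, so the projector swap is in the end only a convenient bookkeeping device. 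Securing that energy-norm approximation estimate with a constant independent of $p$ — again by tensorizing the one-dimensional estimates of \cite{Takacs:Takacs:2015}, the elliptic regularity being classical — is where the real work lies.
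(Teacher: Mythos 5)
Your outline is coherent and takes a genuinely different route from the paper. The algebra is fine: both operators are projectors onto $\widetilde{\mathcal S}_{p,\ell}(\Omega)$, so $(I-\widetilde{\mathcal T}_\ell)=(I-\widetilde{\mathcal T}_\ell)(I-\widetilde{\mathcal Q}_\ell)$ holds, and the Aubin--Nitsche step is set up correctly (with $e=(I-\widetilde{\mathcal T}_\ell)u_\ell$ and $\psi$ solving the dual Neumann problem, $\|e\|_{L^2(\Omega)}^2=(e,\psi-v_\ell)_{\mathcal A}\le\|e\|_{\mathcal A}\inf_{v_\ell\in\widetilde{\mathcal S}_{p,\ell}(\Omega)}\|\psi-v_\ell\|_{\mathcal A}$, with $H^2$-regularity on the convex domain being classical); as you note, this closes the argument directly and makes the projector swap redundant. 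The one genuine gap is exactly the ingredient you defer: a $p$-robust, order-$h_\ell$ \emph{energy-norm} approximation estimate for $H^2$-functions in $\widetilde{\mathcal S}_{p,\ell}(\Omega)$. This is a strictly stronger input than anything used in the present paper, which only ever invokes $L^2$-error bounds for $H^1$-functions; obtaining it by tensorization is not free (one must control mixed second derivatives and needs some $H^1$- or $L^2$-stability of the one-dimensional projector for the cross terms -- the same kind of stability you flag as the main obstacle in the first place). Until that lemma is actually proved, the argument is a plausible program rather than a proof.

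The paper sidesteps elliptic regularity and $H^2$-approximation theory entirely by running the duality at the matrix level. From the one-dimensional $L^2$-estimate of Theorem~\ref{thm:ap} and the $A$-stability of $\widetilde T_\ell$ it first derives $\|(I-\widetilde T_\ell\otimes\widetilde T_\ell)u_\ell\|_{A_\ell\otimes M_\ell+M_\ell\otimes A_\ell}\le c\,h_\ell\|u_\ell\|_{A_\ell\otimes A_\ell}$, then uses the spectral inequalities $\mathcal A_\ell\le A_\ell\otimes M_\ell+M_\ell\otimes A_\ell$ and $A_\ell\otimes A_\ell\le\mathcal A_\ell\mathcal M_\ell^{-1}\mathcal A_\ell$ together with the $\mathcal A$-minimizing property of $\widetilde{\mathcal T}_\ell$ to conclude $\|\mathcal A_\ell^{1/2}(I-\widetilde{\mathcal T}_\ell)\mathcal A_\ell^{-1}\mathcal M_\ell^{1/2}\|\le c\,h_\ell$; transposing this bound and using the $\mathcal A_\ell$-self-adjointness of the projector gives $\|\mathcal M_\ell^{1/2}(I-\widetilde{\mathcal T}_\ell)\mathcal A_\ell^{-1/2}\|\le c\,h_\ell$, which is the claim. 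In other words, the paper's proof is also a duality argument, but the role of elliptic regularity is played by the purely algebraic inequality $\mathcal A_\ell\mathcal M_\ell^{-1}\mathcal A_\ell\ge A_\ell\otimes A_\ell$, so only the $L^2$-approximation estimate already at hand is needed. Your approach is more portable (it would survive losing the Kronecker structure, provided regularity and the energy-norm approximation estimate hold), whereas the paper's is self-contained and cheaper in its ingredients but leans entirely on the tensor-product structure.
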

\begin{proof}
       Theorem~\ref{thm:ap} implies that
		\begin{equation}\nonumber
				\|  (I- \widetilde{T}_\ell) u_\ell \|_{ M_\ell} \le ch_\ell \|u_\ell\|_{A_\ell}
						\qquad \forall u_\ell\in S_{p,\ell}(0,1),
		\end{equation}
		where $\widetilde{T}_\ell: S_{p,\ell}(0,1) \to \widetilde{S}_{p,\ell}(0,1)$ is the
		$A$-orthogonal projector.
		Stability of the $A$-orthogonal projector means that
		\[
				\|  (I- \widetilde{T}_\ell) u_\ell \|_{A_\ell} \le \|u_\ell\|_{A_\ell}
						\qquad \forall u_\ell\in S_{p,\ell}(0,1).
    \]
		Combining these statements, we obtain
		\[
				\|  (I- \widetilde{T}_\ell\otimes \widetilde{T}_\ell) u_\ell \|_{A_\ell\otimes M_\ell+M_\ell \otimes A_\ell}
							\le c h_\ell \|u_\ell\|_{A_\ell\otimes A_\ell}
							\qquad \forall u_\ell\in  \mathcal{S}_{p,\ell}(\Omega) .
    \]
    Note that $A_\ell = K_\ell+M_\ell$. Therefore, $\mathcal{A}_\ell =K_\ell \otimes M_\ell + M_\ell \otimes K_\ell 
    + M_\ell \otimes M_\ell \le A_\ell\otimes M_\ell+M_\ell\otimes A_\ell $ holds, where $\le$ is understood in the
    spectral sense. Moreover,
    \begin{align*}
            \mathcal{A}_\ell \mathcal{M}_\ell^{-1} \mathcal{A}_\ell 
            & = K_\ell M_\ell^{-1}K_\ell\otimes M_\ell
             + K_\ell\otimes K_\ell 
             + K_\ell\otimes M_\ell 
             + K_\ell\otimes K_\ell 
              \\
            &\quad + M_\ell\otimes K_\ell M_\ell^{-1}K_\ell
            + M_\ell  \otimes K_\ell 
            + K_\ell\otimes M_\ell
            + M_\ell  \otimes K_\ell
            +  M_\ell \otimes M_\ell \\
            & \ge K_\ell \otimes K_\ell + K_\ell \otimes M_\ell + M_\ell \otimes K_\ell + M_\ell \otimes M_\ell
            = A_\ell\otimes A_\ell
    \end{align*}
    holds. Using these two estimates, we obtain further 
		\[
				\| (I- \widetilde{T}_\ell\otimes \widetilde{T}_\ell) u_\ell \|_{\mathcal{A}_\ell}
							\le c h_\ell \|u_\ell\|_{\mathcal{A}_\ell \mathcal{M}_\ell^{-1} \mathcal{A}_\ell}
							\qquad \forall u_\ell\in  \mathcal{S}_{p,\ell}(\Omega) .
    \]
		As the $\mathcal{A}$-orthogonal projection minimizes the norm 
		$\|\cdot\|_{\mathcal{A}}=\|\cdot\|_{\mathcal{A}_\ell}$, we obtain
		\[
				\|  (I-\widetilde{\mathcal T}_{\ell}) u_\ell \|_{\mathcal{A}_\ell}
							\le c h_\ell \|u_\ell\|_{\mathcal{A}_\ell \mathcal{M}_\ell^{-1} \mathcal{A}_\ell}
							\qquad \forall u_\ell\in  \mathcal{S}_{p,\ell}(\Omega) ,
    \]
    or in matrix form
		\[
				\| \mathcal{A}_\ell^{1/2} (I-\widetilde{\mathcal T}_{\ell})  \mathcal{A}_\ell^{-1}  \mathcal{M}_\ell^{1/2} \|
							\le c h_\ell.
    \]
		By transposing and using $\mathcal{A}_\ell^{-1}\widetilde{\mathcal T}_{\ell}^T 
		=  \widetilde{\mathcal T}_{\ell}\mathcal{A}_\ell^{-1}$,
		it follows
		\begin{equation}\nonumber
				\| \mathcal{M}_\ell^{1/2} (I-\widetilde{\mathcal T}_{\ell}) \mathcal{A}_\ell^{-1/2} \|
							\le c h_\ell
		\end{equation}
        which can be equivalently rewritten as the desired result
		\[
            \| (I-\widetilde{\mathcal T}_{\ell}) u_\ell \|_{\mathcal{M}_\ell} \le c h_\ell \|u_\ell\|_{\mathcal{A}_\ell}
            \qquad \forall u_\ell\in  \mathcal{S}_{p,\ell}(\Omega) .
            \qedhere
        \]
\end{proof}

Completely analogously to the proof of Lemma~\ref{lem:approx2}, we can extend this
approximation error estimate to
\begin{equation}
    \label{eq:ap:2d}
  \|(I-\mathcal{T}_{\ell-1}) u_\ell \|_{L^2(\Omega)} \le c h_\ell \|u_\ell \|_{ \mathcal{A} }
  \qquad\forall u_\ell  \in \mathcal S_{p,\ell}(\Omega),
\end{equation}
where
$\mathcal{T}_\ell: H^1(\Omega) \to \mathcal{S}_{p,\ell}(\Omega)$
is the $\mathcal{A}$-orthogonal projector.

From \cite[Theorem~9]{Takacs:Takacs:2015}, the following robust inverse
inequality in $\widetilde{\mathcal{S}}_{p,\ell}(\Omega)$, analogous to the
one-dimensional result \eqref{eq:our:inverse}, follows:
\[
			\|u_\ell\|_{\mathcal{A}} \le 2\sqrt{6} h_{\ell}^{-1} \|u_\ell\|_{L^2(\Omega)} \qquad 
			\forall u_\ell \in \widetilde{\mathcal{S}}_{p,\ell}(\Omega).
\]
As in Section~\ref{sec:1d} for the one-dimensional case,
we could set up a smoother based on the $\mathcal A$-orthogonal projection
to an interior space $S_{p,\ell}^I(0,1) \otimes S_{p,\ell}^I(0,1)$ and prove,
by completely analogous arguments, that the resulting multigrid method is robust.
However, the realization of this smoother requires the discrete harmonic extension
from a boundary layer of width $p$ to the interior, which is too computationally expensive.

Instead we propose a slightly modified smoother which better exploits the tensor
product structure of the underlying spaces.
Using $A_\ell=K_\ell+M_\ell$, we obtain
\[
		\mathcal{A}_\ell \le A_\ell \otimes M_\ell + M_\ell \otimes A_\ell.
\]
As we have seen that $A_\ell\le c L_\ell$ for the choice of $L_\ell$
introduced in Section~\ref{sec:1d},
the inverse inequality would be satisfied for the choice
\[
		L_\ell \otimes M_\ell + M_\ell \otimes L_\ell,
\]
which can be expressed using $L_\ell = h_\ell^{-2} M_\ell + C_\ell$ from \eqref{eq:def:khat}
as follows:
\[
			2 h_\ell^{-2} M_\ell \otimes M_\ell + C_\ell \otimes M_\ell + M_\ell \otimes C_\ell
\]
Instead we choose $\mathcal{L}_\ell$ to be a slight perturbation of that matrix, namely,
\[
		\mathcal{L}_\ell := h_\ell^{-2}  M_\ell \otimes M_\ell + C_\ell \otimes M_\ell + M_\ell \otimes C_\ell
					 = h_\ell^2 ( L_\ell \otimes L_\ell - C_\ell\otimes C_\ell ),
\]
Thus, $\mathcal{L}_\ell$ is the sum of a tensor product matrix
$ h_\ell^2 L_\ell \otimes L_\ell$ and a low-rank correction $ h_\ell^2 C_\ell\otimes C_\ell$.
The following two lemmas show the assumptions for multigrid convergence
when using the smoother based on this $\mathcal{L}_\ell$.

\begin{lemma}\label{lem:2d-inverse-mod}
		There is a constant $c$, independent of $\ell$ and $p$,
		such that $\mathcal{A}_\ell \le c\, \mathcal{L}_\ell $.
\end{lemma}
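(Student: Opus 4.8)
The plan is to establish the claim as a chain of inequalities in the Loewner (spectral) order, reducing everything to the one-dimensional inverse inequality that is already available. The natural starting point is the tensor-product bound $\mathcal{A}_\ell \le A_\ell \otimes M_\ell + M_\ell \otimes A_\ell$ derived in the text just above from $A_\ell = K_\ell + M_\ell$ (the difference of the two sides is the positive semidefinite term $M_\ell \otimes M_\ell$). It therefore suffices to bound $A_\ell \otimes M_\ell + M_\ell \otimes A_\ell$ by a constant multiple of $\mathcal{L}_\ell$.

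First I would lift the one-dimensional estimate to the Kronecker setting. Estimate~\eqref{eq:sp:6}, specialized to the interior space so that $L_\ell = h_\ell^{-2} M_\ell + C_\ell$ as in~\eqref{eq:def:khat}, reads $\|u_\ell\|_A^2 \le c\|u_\ell\|_{L_\ell}^2$, i.e.\ $A_\ell \le c\,L_\ell$ in the Loewner order. Using the monotonicity of the Kronecker product with respect to this order --- if $P \le Q$ and $R \ge 0$ then $P \otimes R \le Q \otimes R$, which holds because the Kronecker product of two positive semidefinite matrices is positive semidefinite --- and tensoring $A_\ell \le c\,L_\ell$ once on the right and once on the left with $M_\ell \ge 0$, I obtain
\[
    A_\ell \otimes M_\ell + M_\ell \otimes A_\ell \le c\,(L_\ell \otimes M_\ell + M_\ell \otimes L_\ell).
\]
Expanding the right-hand side via $L_\ell = h_\ell^{-2} M_\ell + C_\ell$ yields
\[
    L_\ell \otimes M_\ell + M_\ell \otimes L_\ell
    = \mathcal{L}_\ell + h_\ell^{-2} M_\ell \otimes M_\ell,
\]
because the expansion produces two copies of $h_\ell^{-2} M_\ell \otimes M_\ell$ while $\mathcal{L}_\ell$ retains only one. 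Since $C_\ell \ge 0$ (its nontrivial block is a Schur complement, equal to the energy of a discrete harmonic extension as noted after~\eqref{eq:def:khat}), the terms $C_\ell \otimes M_\ell$ and $M_\ell \otimes C_\ell$ are positive semidefinite, so $\mathcal{L}_\ell \ge h_\ell^{-2} M_\ell \otimes M_\ell$. Hence the extra copy is itself dominated by $\mathcal{L}_\ell$, giving $L_\ell \otimes M_\ell + M_\ell \otimes L_\ell \le 2\,\mathcal{L}_\ell$ and, combining the three inequalities, $\mathcal{A}_\ell \le 2c\,\mathcal{L}_\ell$.

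I do not expect a genuine obstacle here: once $A_\ell \le c\,L_\ell$ is in hand, the argument is essentially bookkeeping with the Loewner order. The only points needing care are the monotonicity of the Kronecker product and the positive semidefiniteness of $C_\ell$, both standard; the whole argument works precisely because $\mathcal{L}_\ell$ was defined to keep a single copy of $h_\ell^{-2} M_\ell \otimes M_\ell$ rather than the two copies appearing in $L_\ell \otimes M_\ell + M_\ell \otimes L_\ell$, and this ``slight perturbation'' is absorbed at the cost of a harmless factor of $2$.
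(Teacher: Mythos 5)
Your proof is correct and follows essentially the same route as the paper: the chain $\mathcal{A}_\ell \le A_\ell \otimes M_\ell + M_\ell \otimes A_\ell \le c\,(L_\ell \otimes M_\ell + M_\ell \otimes L_\ell) \le 2c\,\mathcal{L}_\ell$, with the extra copy of $h_\ell^{-2} M_\ell \otimes M_\ell$ absorbed using $C_\ell \ge 0$, is exactly the paper's argument and yields the same constant $2c$. You merely make the Kronecker-monotonicity and positive-semidefiniteness steps more explicit than the paper does.
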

\begin{proof}
		The one-dimensional inverse inequality~\eqref{eq:sp:6} in matrix formulation reads
        $
					A_\ell \le c L_\ell.
        $
		Using $L_\ell = h_\ell^{-2} M_\ell+C_\ell$, we obtain
        \begin{align*}
            \mathcal{A}_\ell & \le  A_\ell \otimes M_\ell + M_\ell \otimes A_\ell
            \le c ( L_\ell \otimes M_\ell + M_\ell \otimes L_\ell )  \\
            & = c ( 2 h_\ell^{-2} M_\ell\otimes M_\ell+ C_\ell \otimes M_\ell + M_\ell \otimes C_\ell )   \\
            & \le 2c (   h_\ell^{-2}M_\ell\otimes M_\ell+ C_\ell \otimes M_\ell + M_\ell \otimes C_\ell )
            = 2 c\mathcal{L}_\ell.
            \qedhere
        \end{align*}
\end{proof}

\begin{lemma}
    \label{lem:ap:2d}
    The approximation property
    \[
       \| (I-\mathcal{T}_{\ell-1}) u_\ell \|_{\mathcal{L}_\ell}
       \le c \| u_\ell \|_{\mathcal{A}}
       \qquad \forall u_\ell \in \mathcal S_{p,\ell}(\Omega)
    \]
    holds with a constant $c$ which is independent of $\ell$ and $p$.
\end{lemma}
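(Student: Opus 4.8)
The plan is to split the squared $\mathcal{L}_\ell$-norm of the projection error into three pieces following the definition $\mathcal{L}_\ell = h_\ell^{-2} M_\ell\otimes M_\ell + C_\ell\otimes M_\ell + M_\ell\otimes C_\ell$ and to bound each piece separately by $\|u_\ell\|_{\mathcal{A}}^2 = \|u_\ell\|_{\mathcal{A}_\ell}^2$. Writing $e_\ell := (I-\mathcal{T}_{\ell-1})u_\ell$ and recalling that $C_\ell$ is a Schur complement (hence positive semidefinite), so that all three summands are symmetric positive semidefinite, the quadratic form splits as
\[
    \|e_\ell\|_{\mathcal{L}_\ell}^2
    = h_\ell^{-2}\|e_\ell\|_{\mathcal{M}_\ell}^2
      + e_\ell^T (C_\ell\otimes M_\ell) e_\ell
      + e_\ell^T (M_\ell\otimes C_\ell) e_\ell .
\]
First I would dispose of the genuine mass term. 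The two-dimensional approximation error estimate~\eqref{eq:ap:2d} reads $\|e_\ell\|_{\mathcal{M}_\ell}\le c\, h_\ell\,\|u_\ell\|_{\mathcal{A}_\ell}$, so $h_\ell^{-2}\|e_\ell\|_{\mathcal{M}_\ell}^2\le c^2\|u_\ell\|_{\mathcal{A}_\ell}^2$; this is precisely the mechanism already used in the one-dimensional Theorem~\ref{thm:inner-smoother}.

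The nontrivial part is the pair of correction terms, to which~\eqref{eq:ap:2d} cannot be applied directly because $C_\ell$ is not a mass matrix. The key observation is that $C_\ell$ is dominated by $A_\ell$ in the spectral sense: since $C_\ell = (I-T^I_\ell)^T A_\ell (I-T^I_\ell)$ and $I-T^I_\ell$ is an $A$-orthogonal projector, its stability gives $e^T C_\ell e = \|(I-T^I_\ell)e\|_{A_\ell}^2\le\|e\|_{A_\ell}^2$, that is, $C_\ell\le A_\ell$. Tensoring with the positive semidefinite matrix $M_\ell$ preserves this ordering, whence $C_\ell\otimes M_\ell + M_\ell\otimes C_\ell \le A_\ell\otimes M_\ell + M_\ell\otimes A_\ell$. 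Using $A_\ell=K_\ell+M_\ell$ one checks the clean identity $A_\ell\otimes M_\ell + M_\ell\otimes A_\ell = \mathcal{A}_\ell + \mathcal{M}_\ell$, and since $\mathcal{M}_\ell\le\mathcal{A}_\ell$ this gives $A_\ell\otimes M_\ell + M_\ell\otimes A_\ell \le 2\mathcal{A}_\ell$. The two correction terms are therefore bounded by $2\,\|e_\ell\|_{\mathcal{A}_\ell}^2$, and the $\mathcal{A}$-stability of the $\mathcal{A}$-orthogonal projector $\mathcal{T}_{\ell-1}$ yields $\|e_\ell\|_{\mathcal{A}_\ell}^2\le\|u_\ell\|_{\mathcal{A}_\ell}^2$. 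Combining the three bounds gives $\|e_\ell\|_{\mathcal{L}_\ell}^2\le(2+c^2)\|u_\ell\|_{\mathcal{A}_\ell}^2$, which is the claim.

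I expect the main obstacle to be the correct treatment of the correction terms. A naive attempt to control all of $\mathcal{L}_\ell$ by the approximation estimate fails, and the decisive point is to separate the two mechanisms: the genuine mass term is governed by the approximation estimate~\eqref{eq:ap:2d}, whereas the two harmonic-extension terms are governed purely by $\mathcal{A}$-stability, after dominating $C_\ell$ by $A_\ell$ and collapsing $A_\ell\otimes M_\ell+M_\ell\otimes A_\ell$ back onto $\mathcal{A}_\ell$. Everything else is an assembly of spectral inequalities and projector stability already available in the excerpt.
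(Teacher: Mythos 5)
Your proof is correct and follows essentially the same route as the paper: bound $C_\ell\le A_\ell$ by projector stability, use $A_\ell\otimes M_\ell+M_\ell\otimes A_\ell=\mathcal A_\ell+\mathcal M_\ell$, control the mass part by the approximation estimate~\eqref{eq:ap:2d} and the rest by $\mathcal A$-stability of $I-\mathcal T_{\ell-1}$. The only (cosmetic) difference is that you absorb the leftover $\mathcal M_\ell$ via $\mathcal M_\ell\le\mathcal A_\ell$, whereas the paper absorbs it into the mass term using $h_\ell\le 1$.
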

\begin{proof}
   By an energy minimization argument, we know that $C_\ell \le A_\ell$ and therefore
   \[
       \mathcal{L}_\ell \le h_\ell^{-2} \mathcal M_\ell + M_\ell \otimes A_\ell + A_\ell \otimes M_\ell.
   \]
   Furthermore, since
   $
       M_\ell \otimes A_\ell + A_\ell \otimes M_\ell = \mathcal A_\ell + \mathcal M_\ell
   $
   and $h_\ell \le 1$, we obtain
   \[
       \mathcal{L}_\ell \le 2 h_\ell^{-2} \mathcal M_\ell + \mathcal A_\ell.
   \]
   Thus, using \eqref{eq:ap:2d} and the stability of
   the $\mathcal A$-orthogonal projector $I-\mathcal T_{\ell-1}$, we have
   \[
       \| (I-\mathcal{T}_{\ell-1}) u_\ell\|_{\mathcal{L}_\ell}^2 
       \le
       2 \| (I-\mathcal{T}_{\ell-1}) u_\ell\|_{ h_\ell^{-2} \mathcal{M}_\ell }^2 
         + \| (I-\mathcal{T}_{\ell-1}) u_\ell\|_{\mathcal{A}}^2 
     \le
       (2c + 1) \| u_\ell\|_{\mathcal{A}}^2
   \]
   for all $u_\ell \in \widetilde{S}_{p,\ell}(\Omega).$
\qed\end{proof}

We now immediately obtain the following convergence result.

\begin{theorem}\label{thm:inner-smoother-2d-modified}
    Consider the smoother~\eqref{eq:sm} with 
    \[
        \mathcal{L}_\ell = h_\ell^2 ( L_\ell\otimes L_\ell - C_\ell\otimes C_\ell ),
    \]
    where $L_\ell = h_\ell^{-2} M_\ell + C_\ell$ and
    $C_\ell = (I-T_\ell^I)^T A (I-T_\ell^I)$ as in Section~\ref{sec:1d},
    and $T_\ell^I: S_{p,\ell}(0,1) \to S_{p,\ell}^I(0,1)$
    is the $A$-orthogonal projector.
    Then, there is some damping parameter $\tau>0$ and some choice $\nu_0>0$, both independent of $\ell$
    and $p$, such that the two-grid method with $\nu>\nu_0$
    smoothing steps converges with rate $q=\nu_0/\nu$.
		
    With the same choice of $\tau$ and with $\nu > 4 \nu_0$, also the W-cycle multigrid
    method with $\nu/2$ pre- and $\nu/2$ post-smoothing steps converges in the energy norm with
    convergence rate $q=2\nu_0/\nu$.
\end{theorem}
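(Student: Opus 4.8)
The plan is to verify the two hypotheses of Theorem~\ref{thrm:two:grid:gen1} --- namely the inverse inequality~\eqref{eq:sp:2} and the approximation error estimate~\eqref{eq:ap:2} --- with the specific matrix $\mathcal{L}_\ell$ chosen here, and then simply invoke Theorems~\ref{thrm:two:grid:gen1} and~\ref{thrm:two:grid:gen1a} to conclude both the two-grid and the W-cycle convergence statements. The entire content of the proof is therefore the two spectral estimates $\mathcal{A}_\ell \le c\,\mathcal{L}_\ell$ (the inverse inequality in matrix form) and $\|(I-\mathcal{T}_{\ell-1})u_\ell\|_{\mathcal{L}_\ell} \le c\|u_\ell\|_{\mathcal{A}}$ (the approximation property). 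Both of these have, conveniently, already been established as Lemma~\ref{lem:2d-inverse-mod} and Lemma~\ref{lem:ap:2d}, so the work is essentially to observe that the hypotheses are met and cite the abstract framework.

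First I would note that the $\mathcal{L}_\ell$ appearing in the theorem is identical to the one used in Lemmas~\ref{lem:2d-inverse-mod} and~\ref{lem:ap:2d}: the two representations
\[
    \mathcal{L}_\ell = h_\ell^2(L_\ell \otimes L_\ell - C_\ell \otimes C_\ell)
    = h_\ell^{-2} M_\ell \otimes M_\ell + C_\ell \otimes M_\ell + M_\ell \otimes C_\ell
\]
coincide by the algebraic identity used when $\mathcal{L}_\ell$ was first introduced, namely expanding $L_\ell \otimes L_\ell = (h_\ell^{-2}M_\ell + C_\ell)\otimes(h_\ell^{-2}M_\ell + C_\ell)$ and subtracting $C_\ell \otimes C_\ell$. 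This matters because the framework of Section~\ref{sec:mg} requires $\mathcal{L}_\ell$ to be symmetric and positive definite; symmetry is immediate from the tensor-sum form, and positive definiteness follows since $h_\ell^{-2}M_\ell \otimes M_\ell$ is positive definite and $C_\ell \otimes M_\ell + M_\ell \otimes C_\ell$ is positive semidefinite (as $C_\ell$ is positive semidefinite by its interpretation as a discrete-harmonic-extension energy).

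Next, Lemma~\ref{lem:2d-inverse-mod} gives $\mathcal{A}_\ell \le c\,\mathcal{L}_\ell$, which is exactly assumption~\eqref{eq:sp:2} with $C_I = c$; and Lemma~\ref{lem:ap:2d} gives the approximation property in the form~\eqref{eq:ap} (equivalently~\eqref{eq:ap:2} via Lemma~\ref{lem:mg-approx}) with $C_A = c$. Since the constants in both lemmas are independent of $\ell$ and $p$, the hypotheses of Theorem~\ref{thrm:two:grid:gen1} hold robustly. Choosing $\tau \in (0, C_I^{-1}]$ and setting $\nu_0 := \tau^{-1} C_A$, Theorem~\ref{thrm:two:grid:gen1} yields two-grid convergence with rate $q = \nu_0/\nu$ for $\nu > \nu_0$, and Theorem~\ref{thrm:two:grid:gen1a} yields the W-cycle statement with rate $q = 2\nu_0/\nu$ for $\nu > 4\nu_0$, both robust in $\ell$ and $p$.

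There is no genuine obstacle here, since the two hard estimates were already proved in the preceding lemmas; the only point requiring care is the bookkeeping that the $\mathcal{L}_\ell$ in the theorem statement is literally the same matrix as in the lemmas, so that their conclusions apply verbatim. Were the two lemmas not already available, the main difficulty would have been Lemma~\ref{lem:2d-inverse-mod}: controlling $\mathcal{A}_\ell$ by $\mathcal{L}_\ell$ requires the spectral bound $\mathcal{A}_\ell \le A_\ell \otimes M_\ell + M_\ell \otimes A_\ell$ together with the one-dimensional robust inverse inequality $A_\ell \le cL_\ell$, and the passage from the tensor-sum matrix to the perturbed $\mathcal{L}_\ell$ relies on dropping the harmless factor-of-two discrepancy in the $M_\ell \otimes M_\ell$ block.
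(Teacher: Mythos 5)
Your proposal is correct and follows exactly the paper's own proof: the paper likewise just observes that Lemma~\ref{lem:2d-inverse-mod} and Lemma~\ref{lem:ap:2d} verify assumptions~\eqref{eq:sp:2} and~\eqref{eq:ap:2}, and then cites Theorems~\ref{thrm:two:grid:gen1} and~\ref{thrm:two:grid:gen1a}. Your extra bookkeeping (the Kronecker-product identity showing the two forms of $\mathcal{L}_\ell$ coincide, and its symmetric positive definiteness) is accurate and harmless.
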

\begin{proof}
    Lemma~\ref{lem:2d-inverse-mod} and Lemma~\ref{lem:ap:2d} show the
    assumptions \eqref{eq:sp:2} and \eqref{eq:ap:2}
    of Theorem~\ref{thrm:two:grid:gen1}, respectively.
    The statements then follow from Theorems~\ref{thrm:two:grid:gen1} and~\ref{thrm:two:grid:gen1a}.
\qed\end{proof}

To obtain a quasi-optimal method, we need a way of inverting
the matrix $\mathcal{L}_\ell$ efficiently.
Grouping the degrees of freedom as in~\eqref{eq:k:blocks}, we obtain
\begin{equation}\label{eq:def:k:hat:mod}
    \mathcal{L}_\ell
    = h_\ell^2 L_\ell \otimes L_\ell -  h_\ell^2 [P_\ell^T\otimes P_\ell^T][Q_\ell\otimes Q_\ell][P_\ell\otimes P_\ell]
\end{equation}
with
\begin{equation}\label{eq:def:q}
	P_\ell := \left(\begin{array}{c} I \\0 \end{array} \right)\qquad \text{and}\qquad Q_\ell:=A_{\Gamma\Gamma,\ell} - A_{I\Gamma,\ell}^T A_{II,\ell}^{-1}A_{I\Gamma,\ell}.
\end{equation}
Exploiting the Kronecker product structure,
$L_\ell \otimes L_\ell$ can be efficiently inverted
using the ideas from \cite{DeBoor:1979}.
The second term, $h_\ell^2 [P_\ell^T\otimes P_\ell^T][Q_\ell\otimes Q_\ell][P_\ell\otimes P_\ell]$,
is a low-rank correction that lives only near the four corners of the domain.
We can thus invert the smoother using the Sherman-Morrison-Woodbury formula and obtain
\[
	\mathcal{L}_\ell^{-1} = h_\ell^{-2} (I + ([L_\ell^{-1}P_\ell] \otimes [L_\ell^{-1}P_\ell])
								\mathcal{R}_\ell^{-1}
								(P_\ell^T\otimes P_\ell^T))(L_\ell^{-1} \otimes L_\ell^{-1}),
\]
where $P_\ell$ and $Q_\ell$ are defined as in~\eqref{eq:def:q},
\begin{align}
		\mathcal{R}_\ell  &:= Q_\ell^{-1} \otimes Q_\ell^{-1} - W_\ell^{-1} \otimes W_\ell^{-1},
          \label{def:mcR}\\
		W_\ell  &:= Q_\ell + h_\ell^{-2}( M_{\Gamma\Gamma,\ell} - M_{I\Gamma,\ell}^T M_{II,\ell}^{-1}M_{I\Gamma,\ell}),
          \label{eq:def:w}
\end{align}
and $M_{II,\ell}$, $M_{I\Gamma,\ell}$ and $M_{\Gamma\Gamma,\ell}$ are the blocks of the
mass matrix $M_\ell$ if the degrees of freedom are grouped as in~\eqref{eq:k:blocks}.


\section{Numerical realization and results}\label{sec:num}

\subsection{Numerical realization}\label{subsec:num:1}

As the proposed smoothing procedure appears rather complex, we give in Listing~\ref{listing:code}
pseudo code describing how the two-dimensional smoother based on the matrix
$\mathcal{L}_\ell$ as described in \eqref{eq:def:k:hat:mod}
can be implemented efficiently such that the overall multigrid method
achieves optimal complexity.
\begin{Listing}
\fbox{\parbox{.98\textwidth}{
\textbf{function} smoother()
\begin{itemize}
		\item \textbf{given matrices:}
        one-dimensional mass and stiffness matrices $M_\ell,A_\ell \in \mathbb{R}^{m_\ell\times m_\ell}$
		\item \textbf{input:} function value $\ul{u}_\ell^{(0,n)}\in \mathbb{R}^{m_\ell^2}$, corresponding residual $\ul{r}_\ell^{(0,n)}\in \mathbb{R}^{m_\ell^2}$
		\item Preparatory steps, only performed once:
				\begin{enumerate}
					\item Compute the Cholesky factorization of $A_{II,\ell}$.
					\item Determine $Q_\ell\in \mathbb{R}^{2p\times 2p}$ as defined in~\eqref{eq:def:q}.
					\item Compute the Cholesky factorization of $M_{II,\ell}$.
					\item Determine $W_\ell\in \mathbb{R}^{2p\times 2p}$ as defined in~\eqref{eq:def:w}.
					\item Determine $\mathcal{R}_\ell\in\mathbb{R}^{4p^2\times 4p^2}$ as defined in~\eqref{def:mcR}.
					\item Compute the Cholesky factorization of $\mathcal{R}_\ell$.
          \item Determine the sparse matrix $L_\ell\in\mathbb{R}^{m_\ell\times m_\ell} $ as defined in~\eqref{eq:def:khat}.
          \item Compute the Cholesky factorization of $L_\ell$.
				\end{enumerate}
		\item For each smoothing step do:
			\begin{enumerate}
					\item Determine $\ul{q}_\ell^{(0)} :=h_\ell^{-2}(L_\ell^{-1} \otimes L_\ell^{-1}) \ul{r}_\ell$.
					\item Determine $\ul{q}_\ell^{(1)} :=(P_\ell^T \otimes P_\ell^T) \ul{q}_\ell^{(0)}$.
					\item Determine $\ul{q}_\ell^{(2)} :=\mathcal{R}_\ell^{-1} \ul{q}_\ell^{(1)}$ using the Cholesky factorization of $\mathcal{R}_\ell$.
					\item Determine $\ul{q}_\ell :=([L_\ell^{-1}P_\ell] \otimes [L_\ell^{-1}P_\ell]) \ul{q}_\ell^{(2)}$
						using the Cholesky factorization of $L_\ell$.
					\item Set $\ul{p}_\ell =\ul{q}_\ell^{(0)}+\ul{q}_\ell$.
					\item Update the function $\ul{u}_\ell^{(0,n+1)} := \ul{u}_\ell^{(0,n)} + \tau \ul{p}_\ell$.
					\item Update the residual $\ul{r}_\ell^{(0,n+1)} := \ul{r}_\ell^{(0,n)} - \tau \mathcal{A}_\ell \ul{p}_\ell$.
			\end{enumerate}
		\item \textbf{output:} function value $\ul{u}_\ell^{(0,n+1)}$, corresponding residual $\ul{r}_\ell^{(0,n+1)}$
\end{itemize}
}}
\label{listing:code}
\caption{Pseudo-code implementation for the smoother in two dimensions}
\end{Listing}

The overall costs of the preparatory steps are $\mathcal{O}(m_\ell p^2+p^6)$ floating point operations,
where $m_\ell>p$ is the number of degrees of freedom in one dimension and $m_\ell^2$ is the overall number
of degrees of freedom.

Here, for the preparatory steps~1 and 3, $\mathcal{O}(m_\ell p^2)$ operations are required, as the dimension of the matrices
$A_{II,\ell}$ and $M_{II,\ell}$ is $m_\ell-2p = \mathcal{O}(m_\ell)$ and the bandwidth is $\mathcal{O}(p)$. 
For the preparatory steps~2 and 4, it is required to solve $\mathcal{O}(p)$ linear systems involving this
factorization, which requires again $\mathcal{O}(m_\ell p^2)$ operations. The preparatory steps~5 and 6 just live on
the 4 vertices of the square and require $\mathcal{O}(p^6)$ operations. The preparatory step~7 costs $\mathcal{O}(m_\ell p)$
operations just for adding $A_\ell$ and $L_\ell$. The Cholesky factorization to be performed
in the preparatory step~8 has -- as in the steps~1 and 3 -- a computational complexity of
$\mathcal{O}(m_\ell p^2)$ operations.

The steps~1 and 4 of the smoother itself require $\mathcal{O}(m_\ell^2 p)$ operations each, if the tensor product structure
$L_\ell^{-1} \otimes L_\ell^{-1} = (I \otimes L_\ell^{-1})(L_\ell^{-1} \otimes I)$
is used (see \cite{DeBoor:1979} for the algorithmic idea).
Step~2 of the smoother requires $\mathcal{O}(m_\ell p^2)$ operations. In step~3, only $\mathcal{O}(p^4)$ operations are required
as $\mathcal{R}_\ell$ is a dense $4p^2\times 4p^2$-matrix. The steps~5 and 6, which are only adding up, can be completed with
$\mathcal{O}(m_\ell^2)$ operations. Step~7 can be computed using the decomposition
\begin{equation}\label{eq:the:star:decomp} 
		\mathcal{A}_\ell=(I\otimes M_\ell)(K_\ell\otimes I) + (M_\ell\otimes I)(I\otimes K_\ell)+ (M_\ell\otimes I)(I\otimes M_\ell)
\end{equation}
with $\mathcal{O}(m_\ell^2p)$ operations.

In summary, the preparatory steps require $\mathcal{O}(m_\ell p^2+p^6)$ operations and the smoother itself requires
$\mathcal{O}(m_\ell^2 p)$ operations.
Furthermore, also the coarse-grid correction can be performed with $\mathcal{O}(m_\ell^2p)$ operations,
if the tensor product structure of the intergrid transfer matrices $\mathcal{I}_\ell^{\ell-1}$ and $\mathcal{I}_{\ell-1}^\ell$ is used.
The overall costs are therefore $\mathcal{O}(m_\ell^2 p+p^6)$ or,
assuming $p^5\le m_\ell^2$, $\mathcal{O}(m_\ell^2 p)$ operations.
The method can therefore be called asymptotically optimal,
since the multigrid solver requires $\mathcal{O}(m_\ell^2 p)$ operations in total,
which is the same effort as for multiplication with $\mathcal{A}_\ell$.

In the case of variable coefficients
or geometry transformations, $\mathcal{A}_\ell$ is no longer a sum of
tensor-product matrices. In this case, the computation of the residual of the original problem
requires $\mathcal{O}(m_\ell^2 p^2)$ operations, while the application of the multigrid method as a
preconditioner can still be performed with $\mathcal{O}(m_\ell^2 p)$ operations.

\subsection{Experimental results}\label{subsec:num:2}

As a numerical example, we solve the model problem from Section~\ref{subsec:2b}
with the right-hand side
\[
    f( \mathbf{x} ) = d \pi^2 \prod_{j=1}^d \sin(\pi (x_j + \tfrac12))
\]
and homogeneous Neumann boundary conditions
on the domain $\Omega=(0,1)^d$, $d=1,2$.
We perform a (tensor product) B-spline discretization using uniformly sized
knot spans and maximum-continuity splines for varying spline degrees $p$.
We refer to the coarse discretization with only a single interval as level $\ell=0$
and perform uniform, dyadic refinement to obtain the finer discretization levels $\ell$
with $2^{\ell d}$ elements (intervals or quadrilaterals).

We set up a V-cycle multigrid method according to the framework established in
Section~\ref{sec:mg} and using the proposed smoother \eqref{eq:def:khat}
for the one-dimensional domain and \eqref{eq:def:k:hat:mod} for the two
dimensional domain.
We always use one pre- and one post-smoothing step.
The damping parameter $\tau$ was chosen as $\tau=0.14$ for $d=1$ and $\tau=0.08$ for $d=2$.
In the one-dimensional case, we apply the damping factor
only to the mass component of the smoother. This slightly improves the iteration
numbers but is non-essential.

Due to the requirement that the number of intervals exceed $p$ so that the
space $S^I_{p,\ell}(0,1)$ is nonempty and the construction of the smoother is valid,
we cannot always use $\ell=0$ as the coarse grid for the multigrid method.

We perform two-grid iteration until the Euclidean norm
of the initial residual is reduced by a factor of $10^{-8}$.
The iteration numbers using varying spline degrees $p$ as well as varying
fine grid levels $\ell$ for the one-dimensional domain are given in Table~\ref{tab:iters-1d}.
Here we always used $\ell=5$ as the coarse grid since this is an
admissible choice for all tested $p$.
We observe that the iteration numbers are robust with respect to the grid size,
the number of levels, and the spline degree $p$.

In the two-dimensional experiments, for every degree $p$, we determine the
coarsest level on which $S^I_{p,\ell}(0,1)$ is nonempty and use the next coarser level
as the coarsest grid for the multigrid method.
The resulting iteration numbers are displayed in Table~\ref{tab:iters-2d}.
Here, the bottom-most iteration number in each column corresponds to a two-grid method,
the next higher one to a three-grid method, and so on.
Again, the iteration numbers remain uniformly bounded with respect to all discretization parameters,
although they are much higher than in the one-dimensional case. To mitigate this,
we can solve the problem with Conjugate Gradient (CG) iteration preconditioned with
one multigrid V-cycle. As the resulting iteration numbers in Table~\ref{tab:iters-2d-cg},
now only for $\ell=7$, show, this leads to a significant speedup while maintaining robustness.

\setlength{\tabcolsep}{4.8px}
\begin{table}[htbp]
    \centering
    \begin{tabular}[c]{r|cccccccccccccccccccc}
                & &&&&&&& $p$ \\
                &  1  &  2  &  3  &  4  &  5  &  6 &    7 &   8 &   9 &  10 &  11 &  12 &  13 &  14 &  15 \\ 
                \hline
            12  &  23 &  20 &  20 &  20 &  20 &  20 &  20 &  20 &  20 &  19 &  19 &  19 &  19 &  18 &  18 \\ 
  $\ell$ \; 11  &  23 &  20 &  20 &  20 &  20 &  20 &  20 &  20 &  19 &  19 &  19 &  19 &  18 &  19 &  18 \\ 
            10  &  23 &  20 &  20 &  20 &  20 &  20 &  20 &  19 &  19 &  19 &  19 &  18 &  17 &  17 &  17 \\ 
         \end{tabular}
    \caption{Multigrid iteration numbers in 1D for varying fine-grid levels and spline degrees.}
    \label{tab:iters-1d}
\end{table}

\begin{table}[htbp]
    \centering
    \begin{tabular}[c]{r|ccccccccccccccc}
               &      &       &        &       &       &       &       &  $p$  &      &       &       &       &       &       &       \\
               &  1   &   2   &   3    &  4    &  5    &  6    &  7    &  8   &   9   &   10  &   11  &   12  &   13  &   14  &   15  \\
          \hline
          7    &  86  &   88  &   99   &  102  &  99   &  100  &  99   &  98  &   97  &   96  &   94  &   95  &   93  &   92  &   92  \\
          6    &  84  &   89  &   101  &  104  &  100  &  101  &  100  &  97  &   97  &   96  &   94  &   94  &   94  &   93  &   92  \\
          5    &  83  &   92  &   103  &  103  &  100  &  100  &  101  &  97  &   97  &   96  &   94  &   93  &   94  &   91  &   91  \\
$\ell$ \; 4    &  66  &   95  &   104  &  105  &  102  &  100  &  99   &  96  &   96  &   95  &   94  &   92  &   92  &   91  &   91  \\
          3    &  45  &   97  &   105  &  107  &  103  &  101  &  101  &  -   &  -    &  -    &  -    &  -    &  -    &  -    &  -    \\
          2    &  32  &   97  &   114  &  -    &  -    &  -    &  -    &  -   &  -    &  -    &  -    &  -    &  -    &  -    &  -    \\
          1    &  32  &  -    &  -     &  -    &  -    &  -    &  -    &  -   &  -    &  -    &  -    &  -    &  -    &  -    &  -
    \end{tabular}
    \caption{Multigrid iteration numbers in 2D for varying fine-grid levels and spline degrees.}
    \label{tab:iters-2d}
\end{table}

\begin{table}[htbp]
    \centering
    \begin{tabular}{r|ccccccccccccccc}
        $p$  &  1   &   2   &   3    &  4    &  5    &  6    &  7    &  8   &   9   &   10  &   11  &   12  &   13  &   14  &   15  \\
        \hline
        $\ell=7$ &  21 & 21 & 23 & 23 & 23 & 22 & 23 & 22 & 22 & 22 & 21 & 21 & 21 & 21 & 21
    \end{tabular}
    \caption{Iteration numbers in 2D for CG preconditioned with V-cycle, varying $p$.}
    \label{tab:iters-2d-cg}
\end{table}


\section{Conclusions and outlook}\label{sec:conc}

We have analyzed in detail the convergence
properties of a geometric multigrid method for an isogeometric model
problem using B-splines.
In a first step, we discussed one-dimensional domains.
Based on the obtained insights, in particular on the importance of boundary
effects on the convergence rate, we then proposed a boundary-corrected
mass-Richardson smoother.
We have proved that this new smoother yields convergence rates which are
robust with respect to the spline degree, a result which is not attainable
with classical \cite{HofreitherZulehner:2014c}
or purely mass-based smoothers \cite{HofreitherZulehner:2014b}.

Exploiting the tensor product structure of spline spaces commonly used in IgA,
we extended the construction of the smoother to the two-dimensional setting
and again proved robust convergence for this case.
We have shown how the proposed smoother can be efficiently realized such
that the overall multigrid method has quasi-optimal complexity.
Although we have restricted ourselves to simple tensor product domains, our
technique is easily extended to problems with non-singular geometry mappings
as discussed in Remark~\ref{rem:1}.

The extension of this approach to three or more dimensions remains open.
In particular, the representation of the smoother as the Kronecker product of
one-dimensional smoothers plus some low-rank correction as in \eqref{eq:def:k:hat:mod}
encounters some difficulties in this case.
Therefore, the construction of robust and efficient smoothers for the three- and
higher-dimensional cases is left as future work.


\section*{Acknowledgments}

The work of the first and third author was supported by the
National Research Network ``Geometry + Simulation'' (NFN S117, 2012--2016),
funded by the Austrian Science Fund (FWF).
The work of the second author was funded by the Austrian Science Fund (FWF): J3362-N25.


\section*{Appendix}

Our aim in this section is to prove Theorem~\ref{thm:ap}.
First, however, we show a refined version of the main approximation
result from \cite{Takacs:Takacs:2015}.

\begin{theorem}\label{thrm:approx}
		Let $\ell \in \mathbb{N}_0$ and $p\in \mathbb{N}$. Then
		\[
			\|(I-\widetilde{\Pi}_\ell) u\|_{L^2(\Omega)} \le \sqrt{2} h_\ell |u|_{H^1(\Omega)} \qquad \forall u\in H^1(\Omega),
    \]
		where $\widetilde{\Pi}_\ell:H^1(\Omega)\rightarrow \widetilde{S}_{p,\ell}(\Omega)$
		is the $H^1_{\circ}(\Omega)$-orthogonal projector and
		\[
			(u,v)_{H^1_{\circ}(\Omega)}:=(\nabla u,\nabla v)_{L^2(\Omega)}
						+\frac{1}{|\Omega|}\left(\int_{\Omega}u(x)\dd x\right)\left(\int_{\Omega}v(x)\dd x\right).
		\]
\end{theorem}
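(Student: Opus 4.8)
The plan is to combine the robust best-approximation estimate of \cite{Takacs:Takacs:2015} with an Aubin--Nitsche duality argument that converts the optimality of $\widetilde\Pi_\ell$ in the $H^1_\circ$-inner product into the desired $L^2$ bound. First I would reduce to zero-mean data: since the constant function lies in $\widetilde S_{p,\ell}(\Omega)$ and $\widetilde\Pi_\ell$ is a projector, $\widetilde\Pi_\ell$ reproduces constants, so $(I-\widetilde\Pi_\ell)(u+c)=(I-\widetilde\Pi_\ell)u$ for every $c\in\mathbb R$, while $|u+c|_{H^1(\Omega)}=|u|_{H^1(\Omega)}$. Both sides of the claimed inequality are therefore invariant under $u\mapsto u+c$, and I may assume $\int_\Omega u=0$, in which case $\|u\|_{H^1_\circ(\Omega)}=|u|_{H^1(\Omega)}$. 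Testing the $H^1_\circ$-orthogonality of $e:=(I-\widetilde\Pi_\ell)u$ against $1\in\widetilde S_{p,\ell}(\Omega)$ then gives $\int_\Omega e=0$ as well.

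Next I would set up the dual problem. Let $w\in H^1(\Omega)$ solve $(w,v)_{H^1_\circ(\Omega)}=(e,v)_{L^2(\Omega)}$ for all $v\in H^1(\Omega)$; since $e$ has zero mean this is the Neumann problem $-w''=e$, $w'(0)=w'(1)=0$, $\int_\Omega w=0$, so $w\in H^2(\Omega)$ with $|w|_{H^2(\Omega)}=\|e\|_{L^2(\Omega)}$. Using that $e$ is $H^1_\circ$-orthogonal to $\widetilde S_{p,\ell}(\Omega)$, I rewrite
\[
\|e\|_{L^2(\Omega)}^2=(w,e)_{H^1_\circ(\Omega)}=\big((I-\widetilde\Pi_\ell)w,(I-\widetilde\Pi_\ell)u\big)_{H^1_\circ(\Omega)}
\le \|(I-\widetilde\Pi_\ell)w\|_{H^1_\circ(\Omega)}\,\|(I-\widetilde\Pi_\ell)u\|_{H^1_\circ(\Omega)}.
\]
Stability of the orthogonal projector bounds the last factor by $\|u\|_{H^1_\circ(\Omega)}=|u|_{H^1(\Omega)}$, and since $\widetilde\Pi_\ell$ is $H^1_\circ$-orthogonal the first factor equals $\inf_{s\in\widetilde S_{p,\ell}(\Omega)}\|w-s\|_{H^1_\circ(\Omega)}$. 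Granting the best-approximation bound $\inf_s\|w-s\|_{H^1_\circ(\Omega)}\le\sqrt2\,h_\ell\,|w|_{H^2(\Omega)}$, inserting $|w|_{H^2(\Omega)}=\|e\|_{L^2(\Omega)}$ and cancelling one factor of $\|e\|_{L^2(\Omega)}$ yields the claim with the stated constant.

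The remaining ingredient -- the robust $H^1$-seminorm best-approximation estimate for $w$ -- is where the results of \cite{Takacs:Takacs:2015} enter, and I expect this to be the main obstacle. Choosing a zero-mean approximant $s$, I would reduce $\|w-s\|_{H^1_\circ(\Omega)}$ to $|w-s|_{H^1(\Omega)}=\|w'-s'\|_{L^2(\Omega)}$; differentiation maps $\widetilde S_{p,\ell}(\Omega)$ into the degree-$(p-1)$ spline space whose functions vanish together with their even derivatives at the boundary. Since $w'$ vanishes at $0$ and $1$, the robust $L^2$-approximation estimate of \cite{Takacs:Takacs:2015} for that Dirichlet-type space applies to $w'$ and gives $\|w'-s'\|_{L^2(\Omega)}\le\sqrt2\,h_\ell\,\|w''\|_{L^2(\Omega)}=\sqrt2\,h_\ell\,|w|_{H^2(\Omega)}$. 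The delicate points are identifying precisely the boundary conditions inherited by the derivative space, verifying that the dual solution $w$ is compatible with $\widetilde S_{p,\ell}(\Omega)$ to the required order, and tracking the constants through the duality so that the final value is the sharp $\sqrt2$ rather than a larger multiple.
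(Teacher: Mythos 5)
Your duality argument is sound and arrives at the correct constant, but it is a genuinely different route from the paper's. The paper does not use Aubin--Nitsche at all: it observes that the explicit approximant $u_\ell$ constructed in the proof of \cite[Theorem~1]{Takacs:Takacs:2015} --- the restriction to $(0,1)$ of the $H^1_\circ$-orthogonal projection of the even reflection $u(|x|)$ onto periodic splines on $(-1,1)$ --- already satisfies the $L^2$ bound with constant $\sqrt2\,h_\ell$, and then shows by a symmetry argument (reflecting each test function $\widetilde u_\ell\in\widetilde S_{p,\ell}$ to an even periodic spline and using $(w-w_\ell,\widetilde w_\ell)_{H^1_\circ(-1,1)}=2(u-u_\ell,\widetilde u_\ell)_{H^1_\circ(0,1)}$) that this $u_\ell$ \emph{is} the projector $\widetilde\Pi_\ell u$. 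So the paper imports only the $L^2$-versus-$H^1$ estimate and pays for it with an identification argument, whereas you import nothing about the construction but instead need a second, higher-order ingredient: the best-approximation bound $\inf_{s}\|w-s\|_{H^1_\circ}\le\sqrt2\,h_\ell\,|w|_{H^2}$ for the $H^2$-regular dual solution. Your reduction of that bound to an $L^2$ estimate for $w'$ in the degree-$(p-1)$ derivative space (splines whose even derivatives of order less than $p-1$ vanish at the boundary) is correctly set up --- differentiation does map $\widetilde S_{p,\ell}$ onto that space, and $w'(0)=w'(1)=0$ supplies exactly the compatibility the Dirichlet-type space demands --- but this companion estimate with the sharp constant $\sqrt2$ is not the statement the paper cites, and you would have to either locate it explicitly in \cite{Takacs:Takacs:2015} or prove it yourself by the analogous odd-reflection/periodization argument. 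Provided that lemma is secured, your proof is complete and has the advantage of being independent of the particular construction used in the reference; the paper's proof is shorter and avoids any regularity or duality machinery, at the cost of being tied to the internals of the cited construction.
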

\begin{proof}
    The proof is a slightly improved version of the proof of
    \cite[Theorem~1]{Takacs:Takacs:2015}. There, for any fixed $u\in H^1(\Omega)$, a function
    $u_{\ell}\in \widetilde{S}_{p,\ell}(\Omega)$ was constructed which satisfies the
    approximation error estimate
    \[
			\|u-u_{\ell}\|_{L^2(\Omega)} \le \sqrt{2} h_\ell |u|_{H^1(\Omega)}.
    \]
    (The space $\widetilde{S}_{p,\ell}(\Omega)$ coincides with the space $ \widetilde{S}_{p,h_\ell}(\Omega)$
    in~\cite{Takacs:Takacs:2015}.) Now, it remains to show that~$u_{\ell}$ coincides
    with~$\widetilde{\Pi}_\ell u$, i.e., that $u_{\ell}$ is the $H^1_{\circ}(\Omega)$-orthogonal
    projection of $u$ to the space $\widetilde{S}_{p,\ell}(\Omega)$.
    By definition, this means that we have to show that
    \begin{equation}\label{eq:thrm:approx:1}
    		( u-u_{\ell},\widetilde{u}_\ell)_{H^1_\circ(\Omega)}=0
            \qquad \forall \widetilde{u}_\ell \in \widetilde{S}_{p,\ell}(\Omega).
    \end{equation}
    Now, let $\widehat{S}_{p,\ell}(-1,1)$ be the space of periodic splines on
    $(-1,1)$ with degree $p$ and grid size $h_\ell$ (cf.~\cite[Definition~2]{Takacs:Takacs:2015}).
    Note that, by construction in~\cite[Theorem~1]{Takacs:Takacs:2015}, $u_{\ell}$ is
    the restriction of $w_{\ell}$ to $(0,1)$, where $w_{\ell}$ is the $H^1_{\circ}$-orthogonal projection of
    $w(x):=u(|x|)$ to $\widehat{S}_{p,\ell}(-1,1)$.
    Define $\widetilde{w} _\ell \in \widehat{S}_{p,\ell}(-1,1)$
    by $\widetilde{w}_\ell(x):=\widetilde{u}_\ell(|x|)$ and observe
		$
			 (w-w_{\ell},\widetilde{w}_\ell)_{H^1_\circ(-1,1)}=2(u-u_{\ell},\widetilde{u}_\ell)_{H^1_\circ(0,1)} .
		$		
		As $w_{\ell}$ is by construction the $H^1_{\circ}$-orthogonal projection of $w$, we have
		$(w-w_{\ell},\widetilde{w}_\ell)_{H^1_\circ(0,1)}=0$.
    This shows~\eqref{eq:thrm:approx:1} and therefore $u_{\ell} = \widetilde{\Pi}_\ell u$.
\qed\end{proof}

The error estimate for the $A$-orthogonal, i.e., the $H^1(\Omega)$-orthogonal, projector
$\widetilde{T}_{\ell}$ now follows immediately by a perturbation argument.

\begin{proof}[of Theorem~\ref{thm:ap}]
		Define, for $u , v\in H^1(\Omega)$, the scalar product
		\[
            (u,v)_B:=(u,v)_{A} - (u,v)_{H^1_\circ(\Omega)}
            = (u,v)_{L^2(\Omega)} - \frac{1}{|\Omega|}\left(\int_\Omega u \dd x\right)\left(\int_\Omega v \dd x\right)
		\]
		and observe that the Cauchy-Schwarz inequality implies
		$0\le (v,v)_B \le (v,v)_{L^2(\Omega)}$ for all $v\in H^1(\Omega)$.
        Let $u \in H^1(\Omega)$ arbitrary.
		Orthogonality implies
		\begin{align}
				&((I-\widetilde{\Pi}_\ell) u, w_{\ell} )_{H^1_{\circ}(\Omega)} = 0 \qquad &\forall  w_{\ell}\in \widetilde{S}_{p,\ell}(\Omega),\nonumber\\
				&((I-\widetilde{T}_\ell) u, w_{\ell} )_{A} = 0 \qquad &\forall  w_{\ell}\in \widetilde{S}_{p,\ell}(\Omega).   \label{corr:ap:2}
		\end{align}
        The first of these two equations implies
		\begin{align*}
				& ((I-\widetilde{\Pi}_\ell) u, w_{\ell} )_{A} = ((I-\widetilde{\Pi}_\ell) u, w_{\ell} )_B
				\qquad \forall  w_{\ell}\in \widetilde{S}_{p,\ell}(\Omega),
		\end{align*}
		and subtracting~\eqref{corr:ap:2} yields
		\begin{align*}
				& ((\widetilde{T}_\ell-\widetilde{\Pi}_\ell) u, w_{\ell} )_{A} = ((I-\widetilde{\Pi}_\ell) u, w_{\ell} )_B
				\qquad \forall  w_{\ell}\in \widetilde{S}_{p,\ell}(\Omega).
		\end{align*}
		With the choice $w_{\ell}:=(\widetilde{T}_\ell-\widetilde{\Pi}_\ell) u$
        as well as $\|\cdot\|_A=\|\cdot\|_{H^1(\Omega)}$ and the Cauchy-Schwarz inequality, we obtain
		\begin{align*}
				& \|(\widetilde{T}_\ell-\widetilde{\Pi}_\ell) u\|_{H^1(\Omega)}^2
							\le \|(I-\widetilde{\Pi}_\ell) u\|_B \| (\widetilde{T}_\ell-\widetilde{\Pi}_\ell) u\|_B,
		\end{align*}
		where $\|\cdot\|_B:=(\cdot,\cdot)_B^{1/2}$.
        Using $\|\cdot\|_B\le\|\cdot\|_{L^2(\Omega)}\le\|\cdot\|_{H^1(\Omega)}$,
        it follows that
		$
             \|(\widetilde{T}_\ell-\widetilde{\Pi}_\ell) u\|_{L^2(\Omega)} \le
              \|(I-\widetilde{\Pi}_\ell) u\|_{L^2(\Omega)}
		$.
		Thus, using the triangle inequality and Theorem~\ref{thrm:approx} we obtain
		\begin{align*}
            \|(I-\widetilde{T}_\ell) u\|_{L^2(\Omega)}
            &\le \|(I-\widetilde{\Pi}_\ell) u\|_{L^2(\Omega)} + \|(\widetilde{T}_\ell-\widetilde{\Pi}_\ell) u\|_{L^2(\Omega)}\\
            & \le 2 \|(I-\widetilde{\Pi}_\ell) u\|_{L^2(\Omega)} \le 2 \sqrt{2} h_{\ell} \|u\|_{H^1(\Omega)}.
            \qedhere
		\end{align*}
\end{proof}


\section*{Bibliography}

\bibliographystyle{amsplain}
\bibliography{literature}



\end{document}